\documentclass[a4paper, twoside, 12pt]{article}
\usepackage{amsmath,amsthm,amsfonts,latexsym,amscd,amssymb, enumerate}
\usepackage{hyperref}

\usepackage{float}

\numberwithin{equation}{section}
\setlength{\parskip}{1ex plus 0.5ex minus 0.2ex}
\addtolength{\textwidth}{0.6cm}
\addtolength{\textheight}{0.5cm}
\addtolength{\hoffset}{-0.5cm}
\setlength{\oddsidemargin}{1cm}
\setlength{\evensidemargin}{1cm}
\pagestyle{myheadings}
\thispagestyle{empty}
\markboth{\small{H. Kaur and G. Shanker}}{\small{On some optimal inequalities for bi-slant submanifolds in metallic Riemannian space forms}}
\newtheorem{theorem}{Theorem}[section]
\newtheorem{lemma}{Lemma}[section]
\newtheorem{corollary}{Corollary}[section]
\newtheorem{proposition}{Proposition}[section]
\newtheorem{definition}{Definition}[section]

\theoremstyle{remark}

\date{}
\title{\textbf{On some optimal inequalities for bi-slant submanifolds in metallic Riemannian space forms}}
\author{Harmandeep Kaur and Gauree Shanker\thanks{corresponding author, Email: gauree.shanker@cup.edu.in}}

\begin{document}
	\maketitle
	\begin{center}
		\textbf{Abstract}
	\end{center}
	In this paper, we derive some important optimal relationships for bi-slant submanifolds in metallic Riemannian product space forms enriching the understanding of their geometric properties and deepening the connection between intrinsic and extrinsic curvature invariants. We establish generalized Wintgen inequality for bi-slant submanifolds in metallic Riemannian product space forms and discussed the equality case. Next we derive optimal inequalities involving $\delta$-invariants, also known as Chen-invariants and discuss the conditions for Chen ideal submanifolds. Further, we derive optimal relationships involving Ricci curvature and shape operator invariants along with the discussion about the equality cases. In the last section, we establish optimal inequalities involving generalized normalized $\delta$-Casorati curvatures for bi-slant submanifolds of metallic Riemannian product space form and discuss the conditions under which the equality holds. Furthermore, we examine how the main findings specialize to slant, semi-slant, hemi-slant, and semi-invariant submanifolds in metallic Riemannian product space forms, offering a better understanding of their geometric characteristics.\\
	\textbf{Mathematics Subject Classification (2020):} 53B05, 53B20, 53C20, 53C25, 53C40.\\
	\textbf{Keywords:} Slant submanifolds, metallic Riemannian manifolds, Wintgen inequality, $\delta$-invariants, Casorati curvature, shape operator. 

	\section{Introduction}
	
	A fundamental problem in submanifold theory is the immersibility of a Riemannian manifold into a Euclidean space (or, more generally, a space form). Nash's imbedding theorem ensures that every Riemannian manifold can be isometrically immersed in a sufficiently higher dimensional Euclidean space \cite{gw112}. It inspired researchers to explore the vast area of submanifold geometry. While Nash’s theorem provides a way to utilize extrinsic geometry for studying intrinsic Riemannian geometry, the challenge arises from the high codimension of such immersions, making them difficult to analyze. Addressing this requires establishing optimal relationships between intrinsic and extrinsic invariants of submanifolds. Several well-known results in differential geometry, including the isoperimetric inequality, Chern-Lashof inequality, and Gauss-Bonnet theorem can be viewed as foundational examples of optimal relationships between intrinsic and extrinsic geometric invariants, aligning closely with the focus of this study. With this motivation, Chen introduced a new intrinsic curvature invariant, known as $\delta$-curvature invariant and established optimal relationship between this invariant and the main extrinsic invariants for arbitrary Riemannian submanifolds \cite{chen1}. Working in this direction, several optimal relationships involving $\delta$-curvature invariants have been derived for various classes of submanifolds in different ambient manifolds \cite{gwchenbook}.  \\ 
The Wintgen inequality represents another significant optimal relationship between intrinsic and extrinsic curvature invariants. It is a sharp geometric relationship for surfaces in 4-dimensional Euclidean space involving Gauss curvature (intrinsic invariant), normal curvature, and squared mean curvature (extrinsic invariants), established by P. Wintgen \cite{gw11}. Guadalupe, et al. \cite{gw12} extended this study of  surfaces in $4$-dimensional Euclidean space to surfaces of arbitrary codimension in real space forms. In continuation of this, De Smet, et al. \cite{gw4} studied the Wintgen inequality and proposed a conjecture for general Riemannian submanifolds in real space forms, thereby giving the generalized Wintgen inequality, also known as DDVV conjecture. Ge, et al. \cite{gw8} and Lu \cite{gw13} independently proved the DDVV conjecture. Since then, the study of Wintgen inequality and Wintgen ideal submanifolds has been a tremendous and fascinating area of research \cite{gw119}. \\  
A deep understanding of submanifold geometry hinges on both intrinsic and extrinsic curvature invariants. Casorati curvature, introduced in \cite{casoext1}, is an important extrinsic invariant. It was preferred by Casorati over the traditional Gauss curvature as it aligns more closely with the intuitive understanding of curvature. In computer vision, it represents the bending energy of surfaces in $\mathbb{R}^3$ \cite{casoext2}. This concept has been extended to arbitrary submanifolds in Riemannian geometry as the normalized Casorati curvature and generalized normalized $\delta$-Casorati curvature \cite{h11}. Nowadays, Casorati curvature has become a key tool in deriving optimal inequalities for submanifolds in various ambient spaces \cite{gwcasre}. \\
On the other hand, polynomial structures on smooth manifolds, defined by Goldberg and Yano \cite{gw111}, provide a natural extension of classical differential geometry by incorporating algebraic properties into the geometric framework. These polynomial structures play an important role in differential geometry, particularly in the study of submanifolds, curvature properties, and geometric flows. They also find applications in mathematical physics, providing models for generalized geometric spaces in relativity and string theory. A smooth tensor field $f$ of type $(1,1)$ on a smooth manifold $M$ is said to define a polynomial structure of degree $d$ on $M$ if $d$ is the smallest integer for which the powers $I$, $f$, ..., $f^d$ are dependent, and $f$ has constant rank on $M$  \cite{gw111}. The almost complex structure and the almost contact structure are specific examples of polynomial structures defined on the manifold $M.$ Derived from the golden ratio, the golden structure is a fundamental polynomial structure of degree $2$ on smooth manifolds, characterized by a $(1,1)$-tensor field $\psi$ satisfying $\psi^2 = \psi + I$. Inspired by the golden mean, De Spinadel \cite{gw10} introduced a more general family of polynomial structures, known as the metallic means family, which led to the development of an important class of Riemannian manifolds called metallic Riemannian manifolds \cite{gw1}. Metallic Riemannian manifolds represent a significant advancement in the field of differential geometry, merging classical Riemannian concepts with innovative structures inspired by metallic means. The incorporation of metallic structures enriches the study of geometric properties, introducing new perspectives beyond those of traditional Riemannian manifolds \cite{gw125}. \\
Motivated by the above literature, the main focus of this paper is to contribute further to this line of research by establishing new optimal relationships between intrinsic and extrinsic curvature invariants in the context of bi-slant submanifolds of metallic Riemannian space forms. This paper is organized as follows: Section \ref{sec1w} is devoted to the preliminary notations and results required to prove the main objective of our study. Section \ref{sec2w} comprises the main research findings. In section \ref{sec2.1w}, we derive the generalized Wintgen inequality for bi-slant submanifolds of metallic Riemannian space forms and discussed the equality case. In section \ref{sec2.2w}, we establish optimal relationships involving $\delta$-invariants and discuss the conditions for Chen ideal submanifolds. In section \ref{sec2.3w}, we derive optimal inequalities involving Ricci curvature and shape operator along with discussion about the equality case. Further, in section \ref{sec2.4w}, we establish optimal inequalities involving generalized normalized $\delta$-Casorati curvatures for bi-slant submanifolds of metallic Riemannian product space form and discussed the conditions under which the equality holds. Finally, we discuss the related results for the special cases for semi-slant, hemi-slant, semi-invariant, and slant submanifolds of metallic Riemannian product space form.

	\section{Preliminaries}\label{sec1w}
	In this section, we recall the fundamental concepts required for our study. \\
	\begin{definition}\cite{gw3}
		A $(1, 1)$ tensor field $\mathcal{F}$ defined on a Riemannian manifold $M$ is called almost product structure if for any vector field $X \in \Gamma{(TM)}$, the following equality holds
		\begin{equation*}
			\mathcal{F}^2 = I.	
		\end{equation*}
	\end{definition}
	\begin{definition}\cite{gw3}
		A Riemannian manifold $(M, g, \mathcal{F})$ is called an almost product Riemannian manifold if the Riemannian metric $g$ is $\mathcal{F}$ compatible i.e.,
		\begin{equation*}
			g(X, \mathcal{F} Y) = g(\mathcal{F} X, Y)
		\end{equation*}
		for any vector fields $X, Y \in \Gamma{(TM)}$.
	\end{definition}
	\begin{definition}\cite{gw1}
		A polynomial structure $\phi$ on a smooth manifold $M$ is called a metallic structure if it is determined by a $(1, 1)$ tensor field $\phi$ which satisfies the equation
		\begin{equation*}
			\phi^2 = p \phi + q I,
		\end{equation*}
		where $p$, $q$ are positive integers and $I$ is the identity endomorphism on $\Gamma{(TM)}$.
	\end{definition}
	
	\begin{definition}\cite{gw1}
		A Riemannian manifold $(M, g)$ endowed with a metallic structure $\phi$ is called a metallic Riemannian manifold if the Riemannian metric $g$ is $\phi$ compatible i.e., for any vector fields $X, Y \in \Gamma{(TM)}$, the following relation holds
		\begin{equation*}
			g(X, \phi Y) = g(\phi X, Y).
		\end{equation*}   
	\end{definition}
	Replacing $X$ with $\phi X$, we get
	\begin{equation}\label{w2}
		g(\phi X, \phi Y) = p g(X, \phi Y) + q g(X, Y).
	\end{equation}   
	In particular, when $p = q = 1$, the metallic structure becomes the golden structure and metallic Riemannian manifold is the golden Riemannian manifold.\\
The metallic structure and the almost product structure are closely related. The relationship between the almost product structure and the metallic structure is given by the following proposition.    
	\begin{proposition}\cite{gw1}\label{p1}
		Every almost product structure $\mathcal{F}$ induces two metallic structures on a Riemannian manifold $M$ which are given as follows:
		\begin{equation*}\label{w5}
			\phi_1 = \frac{p}{2}I + \bigg(\frac{2 \sigma_{p, q} - p}{2}\bigg) \mathcal{F},\  \phi_2 = \frac{p}{2}I - \bigg(\frac{2 \sigma_{p, q} - p}{2}\bigg) \mathcal{F}.
		\end{equation*}
		Conversely, every metallic structure $\phi$ on $M$ induces two almost product structures on the manifold, given by
		\begin{equation}\label{w6}
			\mathcal{F} = \pm \bigg(\frac{2}{2\sigma_{p, q} - p}\phi - \frac{p}{2\sigma_{p, q} - p}I\bigg),
		\end{equation}
		where $\sigma_{p, q} = \dfrac{p + \sqrt{p^2 + 4q}}{2}$ represents the members of metallic means family. In particular, if the almost product structure $\mathcal{F}$ is a Riemannian structure, then $\phi_1$ and $\phi_2$ are also metallic Riemannian structures.
	\end{proposition}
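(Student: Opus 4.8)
The plan is to treat the proposition as a direct algebraic verification resting on a single observation: the number $\sigma_{p,q}$ is the positive root of the defining polynomial of the metallic family, so that $\sigma_{p,q}^2 = p\,\sigma_{p,q} + q$, and as an immediate consequence of its explicit form $2\sigma_{p,q} - p = \sqrt{p^2 + 4q}$, hence $(2\sigma_{p,q} - p)^2 = p^2 + 4q$. This last identity is the computational heart of every claim; once it is in hand, each statement reduces to squaring a real linear combination of $I$ and $\mathcal{F}$ (resp. $\phi$) and collecting coefficients.

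First I would prove the forward direction. Writing $\phi_1 = \frac{p}{2}I + \frac{2\sigma_{p,q}-p}{2}\mathcal{F}$ and squaring, I use $\mathcal{F}^2 = I$ to obtain
\[
\phi_1^2 = \left(\frac{p^2}{4} + \frac{(2\sigma_{p,q}-p)^2}{4}\right)I + \frac{p(2\sigma_{p,q}-p)}{2}\mathcal{F}.
\]
Substituting $(2\sigma_{p,q}-p)^2 = p^2 + 4q$ collapses the coefficient of $I$ to $\frac{p^2}{2} + q$, and comparing with $p\phi_1 + qI = \left(\frac{p^2}{2}+q\right)I + \frac{p(2\sigma_{p,q}-p)}{2}\mathcal{F}$ yields $\phi_1^2 = p\phi_1 + qI$ term by term. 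The computation for $\phi_2$ is identical, the sign of the $\mathcal{F}$-term being irrelevant after squaring.

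Next, for the converse I would set $\mathcal{F} = \pm\frac{1}{2\sigma_{p,q}-p}(2\phi - pI)$ and square, using the metallic identity $\phi^2 = p\phi + qI$:
\[
\mathcal{F}^2 = \frac{1}{(2\sigma_{p,q}-p)^2}\left(4\phi^2 - 4p\phi + p^2 I\right) = \frac{4q + p^2}{(2\sigma_{p,q}-p)^2}\,I = I,
\]
where the final equality again invokes $(2\sigma_{p,q}-p)^2 = p^2 + 4q$. The $\pm$ disappears upon squaring, so both choices give an almost product structure.

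Finally, for the metric statement I would observe that $I$ and a Riemannian almost product structure $\mathcal{F}$ are both $g$-self-adjoint, i.e. $g(\mathcal{F}X, Y) = g(X, \mathcal{F}Y)$; since $\phi_1$ and $\phi_2$ are real linear combinations of $I$ and $\mathcal{F}$, they inherit self-adjointness, which is precisely the $\phi$-compatibility $g(\phi_i X, Y) = g(X, \phi_i Y)$ demanded of a metallic Riemannian structure. I do not anticipate a genuine obstacle, since the whole proposition is a bookkeeping exercise; the one point requiring care is the square-root convention, as one must consistently take the positive branch $2\sigma_{p,q} - p = +\sqrt{p^2+4q} > 0$ so that the denominators in the converse are nonzero and the coefficient identities close up correctly.
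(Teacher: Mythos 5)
Your proof is correct and complete: the single identity $(2\sigma_{p,q}-p)^2 = p^2+4q$ does all the work, and your three computations (squaring $\phi_1$ and $\phi_2$ against $\mathcal{F}^2=I$, squaring $\mathcal{F}$ against $\phi^2=p\phi+qI$, and inheriting $g$-self-adjointness from the linear combination of $I$ and $\mathcal{F}$) establish every claim, including the nondegeneracy of the denominator via $2\sigma_{p,q}-p=\sqrt{p^2+4q}>0$. Note that the paper itself gives no proof of this proposition --- it is quoted as a known result from the cited reference on metallic structures --- so there is no internal argument to compare against; your direct algebraic verification is precisely the standard computation underlying that source.
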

For simplicity, let us denote $\sqrt{p^2 + 4q}$ by $\alpha$ i.e.,
	\begin{equation}\label{w3}
		\alpha = \sqrt{p^2 + 4q} = 2\sigma_{p, q} - p.
	\end{equation}
	\begin{definition}\cite{gw2}
		A linear connection $\nabla$ on a metallic Riemannian manifold $(M, g, \phi)$ is said to be a $\phi$-connection if $\phi$ is covariantly constant with respect to $\nabla$ i.e., $\nabla \phi = 0$. If the Levi-Civita connection of $g$ is a $\phi$-connection, then $(M, g, \phi)$ is called a locally metallic Riemannian manifold.
	\end{definition}
	Let $M = (M_1(c_1) \times M_2(c_2), g, \mathcal{F})$ be a locally product Riemannian manifold. The Riemann curvature tensor $\tilde{R}$ of $M$ is given by \cite{gw3}
	\begin{equation}\label{w7}
		\begin{split}
			\tilde{R}(X, Y)Z =& \frac{1}{4}(c_1 + c_2)[g(Y, Z)X - g(X, Z)Y + g(\mathcal{F}Y, Z)\mathcal{F}X - g(\mathcal{F}X, Z)\mathcal{F}Y] +\\& \frac{1}{4}(c_1 - c_2)[g(\mathcal{F}Y, Z)X - g(\mathcal{F}X, Z)Y + g(Y, Z)\mathcal{F}X - g(X, Z)\mathcal{F}Y].
		\end{split}
	\end{equation}
	Using \eqref{w6}, \eqref{w3} and \eqref{w7}, we obtain the Riemann curvature tensor of the locally metallic product Riemannian space form $M$ as
\begin{equation}\label{w8}
	\begin{split}
		\tilde{R}(X, Y)Z =& \frac{(c_1 + c_2)}{4\alpha^2}\big\{\alpha^2[g(Y, Z)X - g(X, Z)Y] + 4[g(\phi Y, Z)\phi X - g(\phi X, Z)\phi Y] +\\& p^2[g(Y, Z)X - g(X, Z)Y] + 2p[g(\phi X, Z)Y + g(X, Z)\phi Y - g(\phi Y, Z)X \\&- g(Y, Z)\phi X]\big\} \pm \frac{(c_1 - c_2)}{2\alpha}\big\{g(\phi Y, Z)X + g(Y, Z)\phi X - g(\phi X, Z)Y - \\&g(X, Z)\phi Y + p[g(X, Z)Y - g(Y, Z)X]\big\}.	
	\end{split}
\end{equation}
		Let $N$ be an $n$-dimensional submanifold of a Riemannian manifold $(M, \tilde{g})$ of dimension $m$. Let us denote the induced metric on $N$ by the same symbol $g$. Let $R$ and $\tilde{R}$ represent the Riemann curvature tensor of $N$ and $M$, respectively. Then, for the vector fields $X, Y, Z, W \in \Gamma(TN)$, the Gauss equation is given by
	\begin{equation}\label{w9}
		R(X, Y, Z, W) = \tilde{R}(X, Y, Z, W) + g(h(X, W), h(Y, Z)) - g(h(X, Z), h(Y, W)),
	\end{equation}
	where $h$ denotes the second fundamental form of $N$ in $M$.\\ 
	Let $\{e_1, ..., e_n\}$ and $\{e_{n+1}, ..., e_m\}$ be the local orthonormal bases of the tangent space $T_xN$ and the normal space $(T_xN)^\perp$ of $N$ in $M$ at $x \in N$. Then the mean curvature vector $\mathcal{H}$ of $N$ is given as
	\begin{equation*}\label{w10}
		\mathcal{H} = \sum_{i = 1}^{n}\frac{1}{n}h(e_i, e_i). 
	\end{equation*}
	The squared norms of the mean curvature vector $\mathcal{H}$ and second fundamental form $h$ are given as
	\begin{equation}\label{w16}
		\parallel\mathcal{H}\parallel^2 = \frac{1}{n^2} \sum_{r = n+1}^{m}\big(\sum_{i = 1}^{n} h_{ii}^r\big)^2, ~~\parallel h \parallel^2 = \sum_{r = n+1}^{m}\sum_{i, j = 1}^{n} (h_{ij}^r)^2.
	\end{equation}
The Casorati curvature $\mathcal{C}$ is given as
	\begin{equation}\label{w126}
	\mathcal{C} = \frac{1}{n}\parallel h \parallel^2.
	\end{equation}
	
	\begin{definition}\cite{h11}
The generalized normalized $\delta$-Casorati curvatures	$\delta_\mathcal{C}(u; n-1)$ and $\hat{\delta}_\mathcal{C}(u; n-1)$ of the submanifold $N$ of $M$ are defined for $u \neq n(n-1), u>0, u \in \mathbb{R}$ as
\begin{equation}
	[\delta_\mathcal{C}(u; n-1)]_x = u \mathcal{C}_x + a(u).\inf\{\mathcal{C}(W) | W \ a \ hyperplane \ of \ T_xN\},
\end{equation}
if $0<u<n(n-1)$, and
\begin{equation}
[ \hat{\delta}_\mathcal{C}(u; n-1)]_x = u \mathcal{C}_x + a(u).\sup\{\mathcal{C}(W) | W \ a \ hyperplane \ of \ T_xN\},	
\end{equation}
if $u > n(n-1)$, where $a(u)$ is given by
\begin{equation*}
	a(u) = \dfrac{(n - 1)(u + n)(n^2 - n - u)}{n u}.
\end{equation*} 
\end{definition}

\begin{definition}\cite{gwquasiu}
 A submanifold $N$ of $M$ is invariantly quasi-umbilical if there exist $(m-n)$ orthogonal unit normal vectors $\{e_{n+1}, ..., e_m\}$ such that the shape
operators with respect to all directions $e_r$ have an eigenvalue of multiplicity $(n-1)$ and that for each $e_r$ the distinguished eigen direction is the same.
\end{definition}	
The scalar curvature $\tau$ and normalized scalar curvature $\rho$ are given as follows:
	\begin{equation}\label{w12}
		\tau = \sum_{1 \leq i < j \leq n}^{} K(e_i \wedge e_j) = \sum_{1 \leq i < j \leq n}^{} R(e_i, e_j, e_j, e_i), ~~~\rho = \frac{2 \tau}{n(n-1)}, 
	\end{equation}
where $K(e_i \wedge e_j)$ represents the sectional curvature of the 2-plane section spanned by $e_i$ and $e_j$.\\
	The normalized scalar curvature of the normal bundle $(TN)^{\perp}$, denoted by $\rho^{\perp}$, is given as
	\begin{equation}\label{w13}
		\rho^{\perp} = \frac{2}{n(n-1)}\bigg[\sum_{1 \leq i < j \leq n}^{} \sum_{n+1 \leq t < s \leq m}^{}g\big(R^{\perp}(e_i, e_j)e_t, e_s\big)^2\bigg]^{\frac{1}{2}},
	\end{equation}
	where $R^{\perp}$ is the Riemann curvature tensor of the normal bundle $(TN)^{\perp}$.
\begin{definition}\cite{h9}
Let $(N,g)$ be a Riemannian manifold. For each given $k$-tuple $(n_1, ..., n_k) \in S(n)$, the finite set consisting of all $k$-tuples $(n_1, ..., n_k)$ of integers $2 \leq n_1, ..., n_k \leq n-1$ with $n_1 + ... + n_k \leq n$, the $\delta$-invariants are defined by
 $$\delta(n_1, ..., n_k) = \tau - \inf\{\tau(\mathcal{L}_1) + ... + \tau(\mathcal{L}_k))\},$$
 $$\hat{\delta}(n_1, ..., n_k) = \tau - \sup\{\tau(\mathcal{L}_1) + ... + \tau(\mathcal{L}_k))\},$$
 where $\mathcal{L}_1, ..., \mathcal{L}_k$ run over all $k$ mutually orthogonal subspaces of $T_xN, x\in N$ such that dim$\mathcal{L}_j = n_j$, $j = 1, ..., k$.
\end{definition}

\begin{definition}\label{omegadef}\cite{book}
Define a Riemannian invariant $\Omega_k$ on a Riemannian manifold $N$ by
\begin{equation}\label{omega11}
\Omega_k(x) = \frac{1}{(k-1)}\inf_{\mathcal{L}^k, X} Ric_{\mathcal{L}^k}(X), x \in N,
\end{equation}	
where $\mathcal{L}^k$ runs over all non-degenerate $k$-plane sections at $x$ and $X$ runs over all unit vectors in $\mathcal{L}^k$.
\end{definition}	
Let $N$ be an $n$-dimensional Riemannian submanifold of a real space form $M(c)$ of constant sectional curvature $c$ and dimension $m$. Then the normal scalar curvature conjecture, also known as the DDVV conjecture is given as\cite{gw4}
	\begin{equation}\label{w14}
		\rho + \rho^{\perp} \leq \parallel\mathcal{H}\parallel^2 + c	
	\end{equation}
	In terms of components of the second fundamental form, \eqref{w14} can be reformulated as\cite{gw5}
	\begin{equation}\label{w15}\begin{split}
\sum_{r = n+1}^{m}\sum_{1 \leq i < j \leq n}^{}\big(h_{ij}^r\big)^2 \geq &\bigg\{\sum_{n+1 \leq t < s \leq m-n}^{}\sum_{1 \leq i < j \leq n}\bigg[\sum_{k=1}^n\big(h_{ik}^th_{jk}^{s} - h_{ik}^sh_{jk}^t\big)\bigg]\bigg\}^{\frac{1}{2}} - \\& \frac{1}{2n}\sum_{r = n+1}^{m}\sum_{1 \leq i < j \leq n}\big(h_{ii}^r - h_{jj}^r\big)^2.
		\end{split}
	\end{equation}
Let $N$ be an $n$-dimensional submanifold isometrically immersed in metallic Riemannian manifold $(M, g, \phi)$ of dimension $m$. Let $\theta$ be the angle between $\phi X$ and $T_xN$ at $x \in N$. The tangent space of $T_xM$ of $M$ has a direct sum decomposition in terms of tangent and normal space of $N$ at $x \in N$, given by
	\begin{equation*}
		T_xM = T_xN + (T_xN)^{\perp}
	\end{equation*}
	Consider the decomposition of $\phi X$ and $\phi U$, for any $X \in T_xN$ and $U \in (T_xN)^{\perp}$, into tangential and normal components as
	\begin{equation}\label{w21}
		\phi X = TX + NX,\ \  \phi U = tU + nU,	
	\end{equation} 
	where $TX$, $tU$ are the tangential components and $NX$, $nU$ are the normal components of $\phi X$, $\phi U$, respectively. 
	\begin{definition}\cite{gw2}
		The submanifold $N$ is a slant submanifold if for any $x \in N$ and $X \in T_xN$, the angle $\theta(X)$ between $\phi X$ and $T_xN$ is constant, the angle $\theta(X)$ is known as the slant angle.
	\end{definition}
In particular, if $\theta = 0$, $N$ is an invariant submanifold and if $\theta = \frac{\pi}{2}$, $N$ is an anti-invariant submanifold; otherwise $N$ is a proper slant submanifold. The slant angle $\theta$ is given by 
	\begin{equation}\label{wtheta}
		\cos\theta = \frac{g(\phi X, TX)}{\parallel \phi X \parallel \parallel TX \parallel} = \frac{\parallel TX \parallel}{\parallel \phi X \parallel}
	\end{equation} 
	\begin{definition}\cite{gw6}
		A differentiable distribution $\mathcal{D}$ on $N$ is called a slant distribution if the angle $\theta_\mathcal{D}$ between $\phi X$ and the vector subspace $\mathcal{D}_x$ is constant, for any $x \in N$ and any non zero vector field $X \in \Gamma(\mathcal{D}_x)$. The constant angle $\theta_\mathcal{D}$ is called the slant angle of the distribution $\mathcal{D}$.
	\end{definition}
	\begin{lemma}\cite{gw6}\label{lemma2}
		Let $\mathcal{D}$ be a differentiable distribution on a submanifold $N$ of a metallic Riemannian manifold $(M, g, \phi)$. The distribution $\mathcal{D}$ is a slant distribution if and only if there exists a constant $\lambda \in [0, 1]$ such that
		\begin{equation*}\label{w19}
			(P_{\mathcal{D}}T)^2X = \lambda\ (p~ P_{\mathcal{D}}TX + qX)
		\end{equation*}
		for any $X \in \Gamma(D)$, where $P_{\mathcal{D}}$ is the orthogonal projection on $\mathcal{D}$. Moreover, if $\theta_\mathcal{D}$ is the slant angle of $D$, then it satisfies $\lambda = \cos^2\theta_\mathcal{D}$.
	\end{lemma}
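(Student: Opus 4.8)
The plan is to reduce the statement to a property of a single self-adjoint endomorphism of $\mathcal{D}$ and then close the harder direction with a polarization argument. Write $Q := P_{\mathcal{D}}T$, regarded as an endomorphism of $\mathcal{D}_x$ at each $x \in N$. First I would record that $T$ is self-adjoint on $T_xN$: for tangent $X,Y$ the decomposition \eqref{w21} gives $g(TX,Y)=g(\phi X,Y)=g(X,\phi Y)=g(X,TY)$, since the normal parts pair trivially with tangent vectors and the metric is $\phi$-compatible. Because $P_{\mathcal{D}}$ is an orthogonal projection (a self-adjoint idempotent) and $P_{\mathcal{D}}X=X$ for $X\in\mathcal{D}$, a short computation then shows that $Q$ is self-adjoint on $\mathcal{D}$, that is, $g(QX,Y)=g(X,QY)$ for all $X,Y\in\Gamma(\mathcal{D})$.

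Next I would express the slant angle analytically. The orthogonal projection of $\phi X$ onto $\mathcal{D}_x$ equals $P_{\mathcal{D}}TX=QX$, because $NX$ is normal and hence annihilated by $P_{\mathcal{D}}$; therefore the angle $\theta_{\mathcal{D}}$ between $\phi X$ and $\mathcal{D}_x$ satisfies $\cos\theta_{\mathcal{D}}=\|QX\|/\|\phi X\|$ for every nonzero $X\in\mathcal{D}$. Using self-adjointness of $Q$ I get $\|QX\|^2=g(X,Q^2X)$, while \eqref{w2} applied to \eqref{w21} yields $\|\phi X\|^2 = p\,g(X,\phi X)+q\,g(X,X)=p\,g(X,QX)+q\,g(X,X)$, the last equality using $g(X,\phi X)=g(X,TX)=g(X,QX)$ for $X\in\mathcal{D}$. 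Combining these,
\begin{equation*}
\cos^2\theta_{\mathcal{D}}(X)=\frac{g(X,Q^2X)}{p\,g(X,QX)+q\,g(X,X)}.
\end{equation*}

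For the reverse implication, assuming $Q^2X=\lambda(pQX+qX)$ for a constant $\lambda$, I pair with $X$ and substitute into the displayed quotient to obtain $\cos^2\theta_{\mathcal{D}}(X)=\lambda$ independent of $X$, so $\mathcal{D}$ is slant with $\cos^2\theta_{\mathcal{D}}=\lambda$; since an angle lies in $[0,\pi/2]$, this also forces $\lambda\in[0,1]$. For the forward implication, slantness says $\cos^2\theta_{\mathcal{D}}=:\lambda$ is constant, so the displayed identity gives $g(X,Q^2X)=\lambda\,g\big(X,\,pQX+qX\big)$ for all $X\in\mathcal{D}$.

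The main step is then to upgrade this diagonal identity to the operator identity. I would set $B(X,Y):=g\big(X,\,Q^2Y-\lambda(pQY+qY)\big)$, which is a symmetric bilinear form on $\mathcal{D}$ (symmetry coming from the self-adjointness of $Q$ and hence of $Q^2$), and which vanishes on the diagonal; by polarization $B\equiv 0$. Since $Q^2Y-\lambda(pQY+qY)\in\mathcal{D}$ and is $g$-orthogonal to all of $\mathcal{D}$, it must vanish, giving $(P_{\mathcal{D}}T)^2X=\lambda\,(p\,P_{\mathcal{D}}TX+qX)$, as claimed. The only delicate point I anticipate is the bookkeeping of tangential and normal components needed to guarantee that $Q$ is genuinely self-adjoint on $\mathcal{D}$ (not merely that $T$ is self-adjoint on $T_xN$), since without that property the polarization step — the crux of the forward direction — would not close.
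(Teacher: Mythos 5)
Your proof is correct, but note that the paper itself contains no proof of this lemma: it is quoted, with the citation \cite{gw6}, from Hretcanu--Blaga, so there is no in-paper argument to compare against. Your route is essentially the standard one in that cited source — identify the projection of $\phi X$ onto $\mathcal{D}_x$ as $QX$ with $Q = P_{\mathcal{D}}T$, use \eqref{w2} and \eqref{w21} to get $\cos^2\theta_{\mathcal{D}}(X) = g(X,Q^2X)/\bigl(p\,g(X,QX)+q\,g(X,X)\bigr)$, and read the equivalence off this identity (the denominator is positive for $X \neq 0$ since $q>0$ forces $\phi X \neq 0$, a point you correctly cover). Where you go beyond the usual presentation is the forward direction: the literature typically passes from the diagonal identity $g(X,Q^2X)=\lambda\,g(X,pQX+qX)$ straight to the operator identity $(P_{\mathcal{D}}T)^2X = \lambda(p\,P_{\mathcal{D}}TX + qX)$ without comment. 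Your observation that $Q$ is self-adjoint on $\mathcal{D}$ (not merely $T$ on $T_xN$), followed by polarization of the symmetric bilinear form $B(X,Y)=g\bigl(X,\,Q^2Y-\lambda(pQY+qY)\bigr)$ and the remark that the resulting vector lies in $\mathcal{D}$ and is orthogonal to all of $\mathcal{D}$, is precisely what makes that passage rigorous. So the proposal is not only correct but makes explicit a step usually left implicit in the source the paper relies on.
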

	\begin{definition}\cite{gw6}
		A submanifold $N$ in a metallic Riemannian manifold $(M, g, \phi)$ is called bi-slant if there exist two orthogonal differentiable distributions $\mathcal{D}_1$ and $\mathcal{D}_2$ on $N$ such that $TN = \mathcal{D}_1 \oplus \mathcal{D}_2$ and $\mathcal{D}_1$ and $\mathcal{D}_2$ are slant distributions with slant angles $\theta_1$ and $\theta_2$, respectively.

	\end{definition} 
	Let $P_1$ and $P_2$ denote the orthogonal projections on $\mathcal{D}_1$ and $\mathcal{D}_2$, respectively. Hence, for any $X \in \Gamma(TN)$, we have $X = P_1X + P_2X,$ where $P_1X \in \Gamma(\mathcal{D}_1),$ and $P_2X \in \Gamma(\mathcal{D}_2)$. Also, using \eqref{w21}, we have 
	$$\phi X = TP_1X + TP_2X + NP_1X + NP_2X.$$
	From equations \eqref{wtheta} and \eqref{w2}, in view of Lemma \ref{lemma2},  we obtain
	\begin{equation}\label{theta1}
	g(TP_iX, TP_iX) = \cos^2\theta_i [pg(\phi P_iX, P_iX) + qg(P_iX, P_iX)], i= 1, 2.
	\end{equation}

	\begin{lemma}\cite{deltaa}\label{lemmadelta}
	Let $a_1, ..., a_n, \epsilon$ be $n+1 (n \geq 2)$ real numbers such that 
	\begin{equation}\label{lemmadel1}
	\big(\sum_{i=1}^{n}a_i\big)^2 = (n-1)\big(\epsilon + \sum_{i=1}^{n}a_{i}^2\big).
	\end{equation}
	Then $2a_1a_2 \geq \epsilon,$ equality holds if and only if $a_1 + a_2 = a_3 = ... = a_n.$
	\end{lemma}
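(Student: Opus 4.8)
The plan is to treat this as a purely algebraic statement and reduce the claimed inequality to a single application of the Cauchy–Schwarz inequality after a judicious regrouping of variables. First I would rewrite the hypothesis \eqref{lemmadel1} by solving for $\epsilon$. Writing $S = \sum_{i=1}^n a_i$ and $Q = \sum_{i=1}^n a_i^2$, the constraint $S^2 = (n-1)(\epsilon + Q)$ gives immediately
\begin{equation*}
\epsilon = \frac{S^2}{n-1} - Q.
\end{equation*}
Thus proving $2a_1a_2 \geq \epsilon$ is equivalent to establishing $2a_1a_2 \geq \frac{S^2}{n-1} - Q$.

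The key step is the substitution that absorbs the two distinguished indices into one. Using $2a_1a_2 = (a_1+a_2)^2 - a_1^2 - a_2^2$ and splitting $Q = a_1^2 + a_2^2 + \sum_{i=3}^n a_i^2$, the terms $a_1^2 + a_2^2$ cancel on both sides, and the desired inequality collapses to
\begin{equation*}
(a_1+a_2)^2 + \sum_{i=3}^n a_i^2 \geq \frac{S^2}{n-1}.
\end{equation*}
Now I would introduce the $n-1$ auxiliary numbers $y_1 = a_1 + a_2$ and $y_j = a_{j+1}$ for $j = 2, \dots, n-1$. These satisfy $\sum_{k=1}^{n-1} y_k = S$, while the left-hand side above is precisely $\sum_{k=1}^{n-1} y_k^2$. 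Hence the statement to prove is exactly the elementary inequality $\sum_{k=1}^{n-1} y_k^2 \geq \frac{1}{n-1}\big(\sum_{k=1}^{n-1} y_k\big)^2$.

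This final inequality is the Cauchy–Schwarz (equivalently QM–AM) bound $\big(\sum_{k=1}^{n-1} 1 \cdot y_k\big)^2 \leq (n-1)\sum_{k=1}^{n-1} y_k^2$, which completes the argument. For the equality case, Cauchy–Schwarz is tight if and only if all the $y_k$ coincide, i.e. $y_1 = y_2 = \dots = y_{n-1}$, which translates back to $a_1 + a_2 = a_3 = \dots = a_n$, matching the stated condition. I do not anticipate a genuine obstacle here; the only non-obvious move is the regrouping $y_1 = a_1 + a_2$, and once that substitution is made the result is immediate. I would be careful only to verify that the cancellation of $a_1^2 + a_2^2$ is exact and that the equality analysis is stated as a biconditional, since the geometric applications later in the paper will invoke the precise equality characterization.
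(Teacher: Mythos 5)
Your proof is correct. The paper itself states this lemma without proof, citing Chen's 1993 paper \cite{deltaa}, and your argument --- solving for $\epsilon$, regrouping the $n$ numbers into the $n-1$ quantities $a_1+a_2, a_3, \dots, a_n$, and applying Cauchy--Schwarz, with the equality case read off from the proportionality condition --- is essentially Chen's original proof, so there is nothing to add.
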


	\begin{table}[H] 
		\begin{center}
			\caption{Classification of submanifolds}
			\label{Table 1}
			\begin{tabular}{l ll ll}
				\hline
				Type of submanifold & Type of $\mathcal{D}_1$  & Type of $\mathcal{D}_2$ & Angle $\theta_1$ & Angle $\theta_2$ \\
				\hline
				Bi-slant  & slant & slant & $\theta_1 \in [0, \frac{\pi}{2}]$ & $\theta_2 \in [0, \frac{\pi}{2}]$ \\
				Semi-slant  & invariant & slant & $\theta_1 = 0$ & $\theta_2 = \theta \in [0, \frac{\pi}{2}]$ \\
				Hemi-slant  & slant & anti-invariant & $\theta_1 = \theta \in [0, \frac{\pi}{2}]$ & $\theta_2 = \frac{\pi}{2}$ \\
				Semi-invariant  & invariant & anti-invariant & $\theta_1 = 0$ & $\theta_2 = \frac{\pi}{2}$ \\
				Slant  & \multicolumn{2}{l}{either $\mathcal{D}_1 = 0$ or $\mathcal{D}_2 = 0$} & \multicolumn{2}{l}{$\theta_1 = \theta_2 = \theta \in [0, \frac{\pi}{2}]$}  \\
			\hline
			\end{tabular}
		\end{center}
	\end{table}
	
	\section{Main Results}\label{sec2w}
	
	\subsection{Generalized Wintgen inequality}\label{sec2.1w}
	In this section, we derive the generalized Wintgen inequality for bi-slant submanifolds in metallic Riemannian space forms.
	
	\begin{theorem}\label{th1}
		Let $N$ be an $n$-dimensional bi-slant submanifold of a locally metallic product space form $M = (M_1(c_1) \times M_2(c_2), g, \phi$) of dimension $m$. Then the following optimal relationship holds:
		\begin{equation*}\label{w1}
			\begin{split}
				\parallel\mathcal{H}\parallel^2 \geq & \rho + \rho^\perp - \frac{(c_1+c_2)}{2\alpha^2}(p^2+2q) + \frac{(c_1+c_2)}{n\alpha^2(n-1)}\big[(n-1)(p~ tr\phi - n~ tr^2\phi) + \\ & \cos^2\theta_1(p~ tr TP_1 +d_1q) + \cos^2\theta_2(p~ tr TP_2 + d_2q)\big] + \frac{(c_1-c_2)}{2n\alpha}(np - 2~ tr\phi)
			\end{split}
		\end{equation*}
		and the equality holds at a point $x \in N$ if and only if the shape operators $\mathcal{A}_r, r = n+1, ..., m;$ take the following forms with respect to some suitable orthonormal basis $\{e_1, ..., e_n\}$ of $T_xN$ and $\{e_{n+1}, ..., e_m\}$ of $(T_xN)^{\perp}$,
		\begin{equation}\label{w26}
			\mathcal{A}_{n+1} =
			\begin{pmatrix} 
				\alpha_1 & \beta & 0 & \cdots & 0 \\
				\beta & \alpha_1 & 0 & \cdots & 0 \\
				0 & 0 & \alpha_1 & \cdots & 0 \\
				\vdots & \vdots & \vdots & \ddots & \vdots\\
				0 & 0 & 0 & \cdots & \alpha_1 \\	
			\end{pmatrix},
				\quad
			\mathcal{A}_{n+2} =
			\begin{pmatrix} 
				\alpha_2 + \beta & 0 & 0 & \cdots & 0 \\
				0 & \alpha_2 - \beta & 0 & \cdots & 0 \\
				0 & 0 & \alpha_2 & \cdots & 0 \\ 
				\vdots & \vdots & \vdots & \ddots & \vdots\\
				0 & 0 & 0 & \cdots & \alpha_2 \\	
			\end{pmatrix}
		\end{equation}
		\begin{equation}\label{w28}
			\mathcal{A}_{n+3} =
			\begin{pmatrix} 
				\alpha_3 & 0 & 0 & \cdots & 0 \\
				0 & \alpha_3 & 0 & \cdots & 0 \\
				0 & 0 & \alpha_3 & \cdots & 0 \\
				\vdots & \vdots & \vdots & \ddots & \vdots\\
				0 & 0 & 0 & \cdots & \alpha_3 \\	
			\end{pmatrix},
			\quad
		\mathcal{A}_{n+4} = ... = \mathcal{A}_m = 0,
		\end{equation} 
		where $\alpha_1$, $\alpha_2$, $\alpha_3$ and $\beta$ are real functions on $N$.
		\end{theorem}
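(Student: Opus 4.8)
The plan is to follow the standard route for generalized Wintgen (DDVV--type) inequalities: combine the Gauss equation with the algebraic reformulation \eqref{w15}, evaluate the ambient contribution explicitly through the bi-slant reduction, and then read off the equality case from the canonical form of the optimizing shape operators. First I would fix orthonormal bases $\{e_1,\dots,e_n\}$ of $T_xN$ and $\{e_{n+1},\dots,e_m\}$ of $(T_xN)^\perp$ and sum the Gauss equation \eqref{w9} over all pairs $(e_i,e_j)$. Using the component expressions \eqref{w16} together with the elementary identity $\sum_{i<j}h_{ii}^r h_{jj}^r=\tfrac12\big[(\sum_i h_{ii}^r)^2-\sum_i(h_{ii}^r)^2\big]$, the extrinsic part collapses into $n^2\parallel\mathcal{H}\parallel^2$ and $\parallel h\parallel^2$, yielding a relation of the shape $2\tau=2\tilde\tau+n^2\parallel\mathcal{H}\parallel^2-\parallel h\parallel^2$, where $\tilde\tau=\sum_{i<j}\tilde R(e_i,e_j,e_j,e_i)$ is the scalar curvature induced by the ambient manifold.

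Next I would evaluate $\tilde\tau$ directly from the metallic curvature tensor \eqref{w8}. Summing over the frame produces the scalar traces $\sum_i g(\phi e_i,e_i)=\operatorname{tr}\phi$, the quadratic trace $\operatorname{tr}^2\phi$, and the mixed sums $\sum_{i,j}g(\phi e_i,e_j)^2=\sum_i\parallel Te_i\parallel^2$. The last term is precisely where the bi-slant hypothesis enters: splitting $T$ through the projections $P_1,P_2$ and applying \eqref{theta1} on each distribution rewrites $\parallel TP_ie\parallel^2$ in terms of $\cos^2\theta_i$, $p\operatorname{tr}(TP_i)$ and $q\,d_i$, with $d_i=\dim\mathcal{D}_i$. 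Collecting these contributions, normalized by $n(n-1)/2$ via \eqref{w12}, reproduces exactly the ambient expression appearing on the right-hand side of the asserted inequality, including the constant $(c_1+c_2)(p^2+2q)/2\alpha^2$ term and the $(c_1-c_2)$ piece.

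I would then handle the normal curvature through the Ricci equation, writing $g(R^\perp(e_i,e_j)e_t,e_s)$ as the shape-operator commutator $g([\mathcal{A}_s,\mathcal{A}_t]e_i,e_j)$ plus the ambient contribution coming from \eqref{w8}, and substituting into \eqref{w13}; one checks that the ambient normal piece does not obstruct the estimate. Feeding the commutator expression, together with the extrinsic scalar-curvature terms, into the purely algebraic DDVV inequality \eqref{w15} gives a bound of the form $\rho+\rho^\perp\le\parallel\mathcal{H}\parallel^2+(\text{ambient terms})$. Rearranging to isolate $\parallel\mathcal{H}\parallel^2$ on the left and inserting the value of $\tilde\tau$ computed above delivers the stated optimal relationship. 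For the equality case I would invoke the sharpness of \eqref{w15}: equality there forces the shape operators to assume the canonical DDVV shape, namely one normal direction carrying the symmetric off-diagonal $\beta$-block, the adjacent one carrying the $(\alpha_2\pm\beta)$ diagonal perturbation, a third acting as the pure dilation $\alpha_3 I$, and all remaining directions vanishing, which is exactly \eqref{w26}--\eqref{w28}.

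The main obstacle I anticipate is the evaluation of $\tilde\tau$ and of the ambient normal-curvature term, since the metallic curvature tensor \eqref{w8} is lengthy and the bi-slant reduction via \eqref{theta1} must be carried out separately on $\mathcal{D}_1$ and $\mathcal{D}_2$ while keeping careful track of the mixed terms $g(\phi e_i,e_j)$ that couple the two distributions; verifying that these cross terms assemble into the clean trace expression $p\operatorname{tr} TP_i+d_iq$, and that no spurious normal-ambient contribution survives in the final bound, is the delicate bookkeeping at the heart of the argument.
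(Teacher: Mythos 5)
Your proposal follows essentially the same route as the paper's own proof: sum the Gauss equation \eqref{w9} with the metallic curvature tensor \eqref{w8}, reduce the ambient terms via the bi-slant identity \eqref{theta1} to the traces $p\,\mathrm{tr}\,TP_i + d_i q$, combine with the algebraic DDVV inequality \eqref{w15} and the definition \eqref{w13} of $\rho^\perp$, and settle the equality case by the canonical shape-operator form from the Ge--Tang proof of the DDVV conjecture \cite{gw8}. The only notable difference is that you explicitly flag the ambient contribution to $R^\perp$ coming from the Ricci equation (promising to check it "does not obstruct the estimate"), a delicate point which the paper's proof passes over silently when it deduces its estimate directly from \eqref{w13} and \eqref{w15}.
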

		\begin{proof}
Consider the local orthonormal frame $\{e_1, ..., e_n\}$ of the tangent space $T_xN$ and $\{e_{n+1}, ..., e_m\}$ of the normal space $(T_xN)^{\perp}$ of $N$ at $x \in N$. Using \eqref{w9} and \eqref{w8}, we obtain the scalar curvature of the submanifold $N$ of $M$ as
\begin{equation}\label{wt3}
\begin{split}
2 \tau =& \frac{n(n-1)(c_1+c_2)}{4\alpha^2}(2p^2+4q) + \frac{(c_1+c_2)}{\alpha^2}\bigg[\sum_{1 \leq i \neq j \leq n}^{}g(\phi e_i, e_i)g(\phi e_j, e_j) - \\ & \parallel T \parallel^2 - p(n-1)\sum_{i = 1}^{n}g(\phi e_i, e_i)\bigg] + \frac{(c_1-c_2)(n-1)}{2\alpha}\bigg[2\sum_{i = 1}^{n}g(\phi e_i, e_i) - np\bigg] \\ & + \sum_{r=n+1}^{m}\sum_{1 \leq i \neq j \leq n}^{}[h_{ii}^rh_{jj}^r - (h_{ij}^r)^2].	
\end{split}
\end{equation}
Using \eqref{theta1} and \eqref{wt3}, we get
\begin{equation}\label{wt4}
\begin{split}
2 \tau &= \frac{n(n-1)(c_1+c_2)}{2\alpha^2}(p^2+2q) + \frac{(c_1+c_2)}{\alpha^2}\big[(n-1)(n~ tr^2\phi - p~ tr\phi) - \cos^2\theta_1 \\ &(p~ tr TP_1 +d_1q) - \cos^2\theta_2(p~ tr TP_2 + d_2q)\big] + \frac{(n-1)(c_1-c_2)}{2\alpha}(2~ tr\phi - np) + \\ & 2\sum_{r = n+1}^{m}\sum_{1 \leq i < j \leq n}^{}[h_{ii}^rh_{jj}^r - (h_{ij}^r)^2].	
\end{split}
\end{equation}
			From \eqref{w16}, we obtain
	\begin{equation}\label{wt5}
		\begin{split}
			2\sum_{r = n+1}^{m}\sum_{1 \leq i < j \leq n}h_{ii}^r h_{jj}^r = n(n-1)\parallel\mathcal{H}\parallel^2 - \frac{1}{n}\sum_{r = n+1}^{m}\sum_{1 \leq i < j \leq n}(h_{ii}^r - h_{jj}^r)^2.  
		\end{split}
	\end{equation}
Making use of \eqref{w13} and \eqref{w15}, we get
\begin{equation}\label{wt6}
	\begin{split}
	-2 \sum_{r = n+1}^{m}\sum_{1 \leq i < j \leq n}(h_{ij}^r)^2 \leq \frac{1}{n}\sum_{r = n+1}^{m}\sum_{1 \leq i < j \leq n}(h_{ii}^r - h_{jj}^r)^2 - n(n-1)\rho^\perp.	
	\end{split}
\end{equation}
From \eqref{wt4}, \eqref{wt5} and \eqref{wt6}, we get
\begin{equation*}
	\begin{split}
2 \tau &\leq \frac{n(n-1)(c_1+c_2)}{2\alpha^2}(p^2+2q) + \frac{(c_1+c_2)}{\alpha^2}\big[(n-1)(n~ tr^2\phi - p~ tr\phi) - \\ & \cos^2\theta_1(p~ tr TP_1 +d_1q) - \cos^2\theta_2(p~ tr TP_2 + d_2q)\big] + \frac{(n-1)(c_1-c_2)}{2\alpha} \\ &(2~ tr\phi - np) + n(n-1)\parallel\mathcal{H}\parallel^2 - n(n-1)\rho^\perp. 		
	\end{split}
\end{equation*}
Hence, using \eqref{w12} in above equation, we obtain the desired inequality. For the equality case, proceeding with similar argumentation as in corollary $(1.2)$ of \cite{gw8}, we conclude that the equality holds at a point $x \in N$ if and only if with respect to some suitable orthonormal basis of the tangent space and the normal space of $N$ at the point $x$, the shape operators take the forms as given by \eqref{w26} and \eqref{w28}.
		\end{proof}
	
	\begin{corollary}\label{cor1}
		Let $N$ be an $n$-dimensional submanifold (as given in Table \ref{Table 1}) of a locally metallic product space form $M = (M_1(c_1) \times M_2(c_2), g, \phi$) of dimension $m$. Then, the following optimal relationships for the different types of slant submanifolds (given in Table \ref{Table 2}) hold and the equality holds if and only if for some suitable orthonormal basis  $\{e_1, ..., e_n\}$ of $T_xN$ and $\{e_{n+1}, ..., e_m\}$ of $(T_xN)^{\perp}$, the shape operator satisfies \eqref{w26} and \eqref{w28}.
\end{corollary}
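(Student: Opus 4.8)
The plan is to read off each row of Table \ref{Table 2} directly from the master inequality of Theorem \ref{th1} by substituting the slant data recorded in Table \ref{Table 1}. Every submanifold listed there is a bi-slant submanifold for a specific choice of the pair $(\theta_1, \theta_2)$, together with a possibly degenerate splitting $TN = \mathcal{D}_1 \oplus \mathcal{D}_2$; consequently the inequality of Theorem \ref{th1} holds verbatim, and the corollary follows once the values of $\cos^2\theta_1$, $\cos^2\theta_2$ are inserted and the trace terms $tr TP_1$, $tr TP_2$ are simplified.

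First I would record how the distribution-dependent factors behave in the two extreme cases. If a distribution is invariant, say $\theta_1 = 0$, then $\phi P_1 X \in \Gamma(\mathcal{D}_1)$ for every $X$, so that $N P_1 = 0$ and $T P_1 = \phi P_1$; hence $\cos^2\theta_1 = 1$ and the bracket $\cos^2\theta_1(p~ tr TP_1 + d_1 q)$ collapses to $p~ tr(\phi P_1) + d_1 q$. If a distribution is anti-invariant, say $\theta_2 = \frac{\pi}{2}$, then $\phi P_2 X \in (T_xN)^{\perp}$, so $T P_2 = 0$ and $\cos^2\theta_2 = 0$; the entire $\mathcal{D}_2$-contribution drops out. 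For a proper slant distribution one simply retains $\cos^2\theta_i$ with $\theta_i = \theta$.

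Next I would carry out the four substitutions into Theorem \ref{th1}. For \emph{semi-slant} submanifolds ($\theta_1 = 0$, $\theta_2 = \theta$) I replace the $\mathcal{D}_1$-bracket by $p~ tr(\phi P_1) + d_1 q$ and keep $\cos^2\theta(p~ tr TP_2 + d_2 q)$. For \emph{hemi-slant} submanifolds ($\theta_1 = \theta$, $\theta_2 = \frac{\pi}{2}$) I delete the $\mathcal{D}_2$-bracket and retain $\cos^2\theta(p~ tr TP_1 + d_1 q)$. For \emph{semi-invariant} submanifolds ($\theta_1 = 0$, $\theta_2 = \frac{\pi}{2}$) I simultaneously set the $\mathcal{D}_1$-bracket equal to $p~ tr(\phi P_1) + d_1 q$ and delete the $\mathcal{D}_2$-bracket. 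Finally, for a \emph{slant} submanifold one distribution is empty, say $\mathcal{D}_2 = 0$, so $P_1 = I$, $d_1 = n$, $tr TP_1 = tr T$ and $\theta_1 = \theta$, whence the bracket reduces to $\cos^2\theta(p~ tr T + nq)$. Collecting the constant terms in each case produces the corresponding entry of Table \ref{Table 2}.

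For the equality discussion I would note that the sharpness argument of Theorem \ref{th1} rests only on the DDVV bound \eqref{w15} and on Corollary $(1.2)$ of \cite{gw8}, neither of which involves the slant structure of $N$; hence the characterization carries over unchanged, and equality in each specialized inequality holds at $x \in N$ precisely when the shape operators take the forms \eqref{w26} and \eqref{w28} relative to a suitable orthonormal frame. The only delicate point, and the mild obstacle in writing the argument cleanly, is the bookkeeping of the invariant and anti-invariant trace terms, namely verifying that $T$ restricts to $\phi$ on an invariant factor and vanishes on an anti-invariant factor; once these two reductions are in place the remainder is a routine substitution.
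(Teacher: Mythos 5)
Your proposal is correct and is exactly the route the paper intends: Corollary \ref{cor1} is stated as a direct specialization of Theorem \ref{th1}, obtained by inserting the slant data of Table \ref{Table 1} (namely $\cos^2\theta_i = 1$ for an invariant factor, $\cos^2\theta_i = 0$ for an anti-invariant factor, and $P_1 = I$, $d_1 = n$, $tr\,TP_1 = tr\,T$ in the slant case), and your four substitutions reproduce each row of Table \ref{Table 2}. Your observation that the equality characterization carries over unchanged, because the sharpness argument via \eqref{w15} and Corollary $(1.2)$ of \cite{gw8} does not involve the slant structure, is likewise the paper's (implicit) reasoning.
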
	

	\begin{table}[H]      
	\begin{center}
		\caption{Generalized Wintgen inequality for submanifolds of locally metallic product space form}
		\label{Table 2}
		\vspace{.06cm}
			\begin{tabular}{|l| p{10.5cm}|}
				\hline
				Type of $N$~~ & Generalized Wintgen Inequality\\
				\hline
				Semi-slant & $\parallel\mathcal{H}\parallel^2 \geq \rho + \rho^\perp - \dfrac{(c_1+c_2)}{2\alpha^2}(p^2+2q) + \dfrac{(c_1+c_2)}{n\alpha^2(n-1)}\big[(n-1)(p~ tr\phi - n~ tr^2\phi) + p~trTP_1 + d_1q + \cos^2\theta(p~trTP_2 + d_2q)\big] + \dfrac{(c_1-c_2)}{2\alpha n}(np -2~tr\phi)$\\
				\hline
				Hemi-slant & $\parallel\mathcal{H}\parallel^2 \geq \rho + \rho^\perp - \dfrac{(c_1+c_2)}{2\alpha^2}(p^2+2q) + \dfrac{(c_1+c_2)}{n\alpha^2(n-1)}\big[(n-1)(p~ tr\phi - n~ tr^2\phi) + \cos^2\theta(p~trTP_1 + d_1q)\big] + \dfrac{(c_1-c_2)}{2\alpha n}(np -2~tr\phi)$\\
				\hline
				Semi-invariant & $\parallel\mathcal{H}\parallel^2 \geq \rho + \rho^\perp - \dfrac{(c_1+c_2)}{2\alpha^2}(p^2+2q) + \dfrac{(c_1+c_2)}{n\alpha^2(n-1)}\big[(n-1)(p~ tr\phi - n~ tr^2\phi) + p~trTP_1 + d_1q \big]+ \dfrac{(c_1-c_2)}{2\alpha n}(np -2~tr\phi)$\\
				\hline
				Slant & $\parallel\mathcal{H}\parallel^2 \geq \rho + \rho^\perp - \dfrac{(c_1+c_2)}{2\alpha^2}(p^2+2q) + \dfrac{(c_1+c_2)}{n\alpha^2(n-1)}\big[(n-1)(p~ tr\phi - n~ tr^2\phi) + \cos^2\theta(p~trT + nq)\big] + \dfrac{(c_1-c_2)}{2\alpha n}(np -2~tr\phi)$\\
				\hline
			\end{tabular}	
		\end{center}
	\end{table}	
%


	\subsection{Optimal inequalities involving $\delta$-invariants}\label{sec2.2w}
In this section, we establish optimal inequalities involving $\delta$-invariants for submanifolds in metallic Riemannian space forms.	
	\begin{theorem}\label{wcth1}
	Let $N$ be an $n$-dimensional bi-slant submanifold of a locally metallic product space form $M = (M_1(c_1) \times M_2(c_2), g, \phi$) of dimension $m$. Then for each $k$-tuple $(n_1, ..., n_k) \in S(n),$ we have
	 \begin{align}\label{wc1}
\delta(n_1, ..., n_k) \leq&  \frac{(c_1+c_2)}{2\alpha^2}[b(n_1, ..., n_k)(p^2+2q+2~tr^2\phi) - d(n_1, ..., n_k)p~tr\phi +  \nonumber \\
&(k-1)( \cos^2\theta_1 (p~ tr TP_1 +d_1q) + \cos^2\theta_2(p~ tr TP_2 + d_2q))] +  \nonumber \\
& \frac{(c_1-c_2)}{4\alpha}[2~tr\phi ~d(n_1, ..., n_k) - p~b(n_1, ..., n_k)] + c(n_1, ..., n_k)\parallel \mathcal{H}\parallel^2,
\end{align}
where 
 \begin{align*}
&b(n_1, ..., n_k) = \frac{1}{2}[n(n-1) - \sum_{i=1}^{k}n_i(n_i-1)],  \nonumber \\
&c(n_1, ..., n_k) = \dfrac{(n+k-1-\sum_{i=1}^{k}n_i)}{2(n+k-\sum_{i=1}^{k}n_i)} n^2  \nonumber,  \\ 
&d(n_1, ..., n_k) = (n-1) - \sum_{i=1}^{k}(n_i-1)  \nonumber
\end{align*}
	and the equality holds at $x \in N$ if and only if for some suitable orthonormal basis $\{e_1, ..., e_n\}$ of $T_xN$ and $\{e_{n+1}, ..., e_m\}$ of $(T_xN)^{\perp}$, the shape operator takes the following form
	\begin{equation}\label{wce2}
			\mathcal{A}_{n+1} =
			\begin{pmatrix}  
				A_{\Delta_1} & \cdots & 0 & \\
				\vdots & \ddots & \vdots & 0 \\
				0 & \cdots & A_{\Delta_k} & \\
				   & 0          &                     & \nu I    \\
			\end{pmatrix},
			\quad
			\mathcal{A}_{r} =
			\begin{pmatrix}  
	       A_{\Delta_1}^r & \cdots & 0                   & \\
		         \vdots & \ddots & \vdots           & 0 \\
				0 & \cdots & A_{\Delta_k}^r & \\
				   & 0          &                     &  O   \\
						\end{pmatrix},
		\end{equation}
	where $I$ is an identity matrix and $O$ is the zero matrix of order $n_{k+1},$ $A_{\Delta_j}$ and $A_{\Delta_j}^r$ are symmetric submatrices of order $n_j$ such that $trace(A_{\Delta_j}) = \nu$ and $trace(A_{\Delta_j}^r) = 0,$ $1 \leq j \leq k,$ $r = \{n+2, ..., m\}.$
	\end{theorem}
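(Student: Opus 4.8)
The plan is to follow Chen's general strategy for $\delta$-invariants, reusing the scalar-curvature computation from the proof of Theorem \ref{th1} and transplanting it to the mutually orthogonal subspaces $\mathcal{L}_1, \dots, \mathcal{L}_k$. I fix $x \in N$ and choose an orthonormal basis $\{e_1, \dots, e_n\}$ of $T_xN$ adapted to the configuration, so that $\mathcal{L}_\alpha = \mathrm{span}\{e_i : i \in \Delta_\alpha\}$ for disjoint index blocks $\Delta_\alpha$ with $|\Delta_\alpha| = n_\alpha$, together with an orthonormal basis $\{e_{n+1}, \dots, e_m\}$ of $(T_xN)^\perp$. Applying the Gauss equation \eqref{w9} with the ambient curvature tensor \eqref{w8}, exactly as in \eqref{wt3}--\eqref{wt4}, gives the expression for $2\tau$; restricting every index sum to a single block $\Delta_\alpha$ and reusing the slant relation \eqref{theta1} gives the analogous expression for each $2\tau(\mathcal{L}_\alpha)$.

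Forming $\tau - \sum_\alpha \tau(\mathcal{L}_\alpha)$ separates into an ambient part and a second fundamental form part. In the ambient part, the constant-curvature contribution produces the coefficients $b(n_1,\dots,n_k)$ and $d(n_1,\dots,n_k)$ by pair-counting (the number of pairs $(i,j)$ not confined to a single block) and the corresponding linear count, after bounding the block-restricted traces of $\phi$ by the global ones. The slant contribution is the key geometric subtlety: the full-space term carries $\|T\|^2 = \cos^2\theta_1(p\,\mathrm{tr}\,TP_1 + d_1 q) + \cos^2\theta_2(p\,\mathrm{tr}\,TP_2 + d_2 q)$, while each subspace carries $\|T|_{\mathcal{L}_\alpha}\|^2 \le \|T\|^2$; summing the $k$ restricted terms against the single full term yields precisely the factor $(k-1)$ in front of the slant invariants in \eqref{wc1}.

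The crux is the algebraic optimization for the second fundamental form part. After discarding the nonnegative quantity $\sum_r\sum_{i,j}(h_{ij}^r)^2 - \sum_r\sum_\alpha\sum_{i,j\in\Delta_\alpha}(h_{ij}^r)^2$ (the sum of all squared components not confined to a single block), what remains is the diagonal expression $\sum_r \big[ (\sum_i h_{ii}^r)^2 - \sum_\alpha (\sum_{i\in\Delta_\alpha} h_{ii}^r)^2 \big]$, which I bound by Chen's sharp quadratic inequality for $\delta(n_1,\dots,n_k)$; its constant is $\frac{n+k-1-\sum n_\alpha}{n+k-\sum n_\alpha}$, and multiplying by $\sum_r(\sum_i h_{ii}^r)^2 = n^2\|\mathcal{H}\|^2$ from \eqref{w16} produces the term $c(n_1,\dots,n_k)\|\mathcal{H}\|^2$. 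I expect this optimization, rather than the curvature bookkeeping, to be the main obstacle: the sharp constant must be established by a constrained (Lagrange-multiplier) extremum argument, and its equality case is what dictates the shape-operator normal form. Dividing the resulting inequality by $2$ and invoking \eqref{w12} gives \eqref{wc1} for every admissible configuration, hence for the infimum defining $\delta(n_1,\dots,n_k)$.

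For the equality statement I would trace the two inequalities used. Equality in the discarded term forces $h_{ij}^r = 0$ whenever $e_i$ and $e_j$ lie in different blocks or outside $\cup_\alpha \Delta_\alpha$, so every shape operator becomes block-diagonal. Equality in Chen's optimization forces, in the distinguished direction $e_{n+1}$, the block traces to share a common value $\nu$ (the $\nu I$ block carrying the leftover indices) and, in the remaining normal directions $e_r$, all block traces to vanish. Assembling these conditions yields exactly the normal forms \eqref{wce2} with $\mathrm{trace}(A_{\Delta_j}) = \nu$ and $\mathrm{trace}(A_{\Delta_j}^r) = 0$, completing the characterization.
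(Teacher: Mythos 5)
Your global strategy (block decomposition of $2\tau - 2\sum_\alpha\tau(\mathcal{L}_\alpha)$, pair counting for the ambient contribution, the factor $(k-1)$ from the slant terms, and a sharp quadratic bound on the diagonal part of the second fundamental form) is the same Chen-type scheme the paper uses, and your curvature bookkeeping agrees with the paper's. The gap is in your crux step, which is false as stated. Once you discard the whole nonnegative quantity $\sum_r\sum_{i,j}(h^r_{ij})^2 - \sum_r\sum_\alpha\sum_{i,j\in\Delta_\alpha}(h^r_{ij})^2$, you claim that $\sum_r\big[(\sum_i h^r_{ii})^2 - \sum_\alpha(\sum_{i\in\Delta_\alpha}h^r_{ii})^2\big] \le \frac{\mu-1}{\mu}\sum_r\big(\sum_i h^r_{ii}\big)^2$, where $\mu = n+k-\sum_\alpha n_\alpha$. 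This is wrong whenever $\sum_\alpha n_\alpha < n$, which the definition of $S(n)$ permits (and which always happens for $k=1$, since $n_1\le n-1$): take all $h^r_{ij}=0$ except $h^r_{nn}=1$ for a single normal direction $r$, where the index $n$ lies outside $\cup_\alpha\Delta_\alpha$; then the left-hand side equals $1$ while the right-hand side equals $\frac{\mu-1}{\mu}<1$. The reason is that the quantity you discarded contains the leftover diagonal squares $(h^r_{ii})^2$, $i\notin\cup_\alpha\Delta_\alpha$, and the sharp constant $\frac{\mu-1}{\mu}$ is valid only for the quantity $(\sum_i h^r_{ii})^2 - \sum_\alpha(\sum_{i\in\Delta_\alpha}h^r_{ii})^2 - \sum_{i\notin\cup_\alpha\Delta_\alpha}(h^r_{ii})^2$, in which those squares are retained. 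The paper keeps exactly these terms: in \eqref{wtchen6} the diagonal entries are regrouped into $\mu+1$ numbers $\bar z_1,\dots,\bar z_{\mu+1}$, with each leftover entry $z_{n_1+\cdots+n_k+1},\dots,z_n$ remaining a separate variable, and Lemma \ref{lemmadelta} is applied to that full collection.

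The same oversight makes your equality discussion internally inconsistent: equality in your discarding step would force $h^{n+1}_{ii}=0$ for every leftover index $i$, contradicting the block $\nu I$ in \eqref{wce2}. The repair stays within your framework: discard only the off-diagonal components not confined to a single block, keep $\sum_{i\notin\cup_\alpha\Delta_\alpha}(h^r_{ii})^2$ on the left, and apply Cauchy--Schwarz (or Lemma \ref{lemmadelta}) for each $r$ to the $\mu$ numbers $\sum_{i\in\Delta_1}h^r_{ii},\dots,\sum_{i\in\Delta_k}h^r_{ii}$ together with the leftover entries $h^r_{jj}$. This yields the bound $\frac{\mu-1}{\mu}\sum_r(\sum_i h^r_{ii})^2 = 2c(n_1,\dots,n_k)\parallel\mathcal{H}\parallel^2$ by \eqref{w16}, and, with $e_{n+1}$ chosen parallel to $\mathcal{H}$ so that $tr\,\mathcal{A}_r=0$ for $r\ge n+2$, the equality case (all $\mu$ grouped values equal, hence equal to $\nu$ for $r=n+1$ and to $0$ for $r\ge n+2$) reproduces \eqref{wce2} exactly as you describe.
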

	\begin{proof}
	Consider an orthonormal basis $\{e_1, ..., e_m\}$ of $T_xM$ at $x \in M.$ Let us choose $k$-mutually orthogonal subspaces $\mathcal{L}_1, ..., \mathcal{L}_k$ of $T_xN$ such that 
	$$\mathcal{L}_i = span\{e_{\beta_i} | \beta_i \in \Delta_i\}, 1 \leq i \leq k,$$ 
	where $\Delta_1 = \{1, ..., n_1\}, \Delta_2 = \{n_1+1, ..., n_1+n_2\}, ..., $\\
	$\Delta_k = \{n_1+...+n_{k-1}+1, ..., n_1+...+n_k\}$ and $dim(\mathcal{L}_i) = n_i~ \forall 1 \leq i \leq k.$\\
From \eqref{w9}, \eqref{w8} and \eqref{theta1}, we obtain the scalar curvature of the subspace $\mathcal{L}_i$ as
\begin{align}\label{wtchen1}
\tau(\mathcal{L}_i) =& \frac{n_i(n_i-1)(c_1+c_2)}{4\alpha^2}(p^2+2q) + \frac{(c_1+c_2)}{2\alpha^2}\big[(n_i-1)(n_i~ tr^2\phi - p~ tr\phi) -  \vartheta_1 -  \nonumber \\
&\vartheta_2\big] + \frac{(n_i-1)(c_1-c_2)}{4\alpha}(2~ tr\phi - n_ip) + \sum_{r = n+1}^{m}\sum_{\substack{\gamma_i < \beta_i \\ \gamma_i, \beta_i \in \Delta_i}}[h_{\gamma_i\gamma_i}^rh_{\beta_i\beta_i}^r - (h_{\gamma_i\beta_i}^r)^2],	
\end{align}
where $\vartheta_1 = \cos^2\theta_1 (p~ tr TP_1 +d_1q)$ and $\vartheta_2 = \cos^2\theta_2(p~ tr TP_2 + d_2q).$ \\
Also the scalar curvature of $N$ at $x$ is given by
\begin{align}\label{wtchen2}
2 \tau(x) =& \frac{n(n-1)(c_1+c_2)}{2\alpha^2}(p^2+2q) + \frac{(c_1+c_2)}{\alpha^2}\big[(n-1)(n~ tr^2\phi - p~ tr\phi) - \vartheta_1  \nonumber \\
 &- \vartheta_2\big] + \frac{(n-1)(c_1-c_2)}{2\alpha}(2~ tr\phi - np) + n^2 \parallel \mathcal{H}\parallel^2 - \parallel h\parallel^2.
\end{align}
Let us substitute 
\begin{align}\label{wtchen3}
\xi =&~ 2 \tau(x) - 2 c(n_1, ..., n_k) \parallel\mathcal{H}\parallel^2 - \frac{n(n-1)(c_1+c_2)}{2\alpha^2}(p^2+2q) - \frac{(c_1+c_2)}{\alpha^2} \nonumber \\
&\big[(n-1)(n~ tr^2\phi - p~ tr\phi) - \vartheta_1 - \vartheta_2\big] - \frac{(n-1)(c_1-c_2)}{2\alpha}(2tr\phi - np).
\end{align}
From \eqref{wtchen2} and \eqref{wtchen3}, we obtain
\begin{align}\label{wtchen4}
n^2 \parallel\mathcal{H}\parallel^2 = \mu (\xi + \parallel h\parallel^2),
\end{align}	
where $\mu = (n+k-\sum_{i=1}^{k} n_i).$\\
By selecting the normal vector $e_{n+1}$ to align with the mean curvature vector at $x \in N$ and applying \eqref{wtchen4}, we obtain the following 
\begin{align}\label{wtchen5}
(\sum_{i=1}^{n}z_i)^2 = \mu \big[\xi + \sum_{1 \leq i \neq j \leq n}(h_{ij}^{n+1})^2 + \sum_{i=1}^{n}(z_i)^2 + \sum_{r=n+2}^{m}\sum_{i, j=1}^{n}(h_{ij}^r)^2 \big],
\end{align}
where $z_i = h_{ii}^{n+1},~ 1 \leq i \leq n.$\\
Consider the following framework
\begin{align}
&\bar{z}_1 = z_1, \bar{z}_2 = z_2 + ... + z_{n_1}, \bar{z}_3 = z_{n_1+1} + ... + z_{n_1+n_2}, ..., \nonumber \\
&\bar{z}_{k+1} = z_{n_1+...+n_{k-1}+1} + ... + z_{n_1+...+n_{k}},  \bar{z}_{k+2} = z_{n_1+...+n_{k}+1}, ...,  \nonumber \\
&\bar{z}_{\mu+1} = z_{n}. \nonumber 
\end{align}
Using the above formulation, \eqref{wtchen5} can be rewritten as
\begin{align}\label{wtchen6}
\big(\sum_{i=1}^{\mu + 1}\bar{z}_i\big)^2 =& \mu \Big[\xi + \sum_{i=1}^{\mu + 1}(\bar{z}_i)^2 + \sum_{1 \leq i \neq j \leq n}(h_{ij}^{n+1})^2 + \sum_{r=n+2}^{m}\sum_{i, j=1}^{n}(h_{ij}^r)^2 - \nonumber \\
&\sum_{\substack{2\leq \gamma_1 \neq \beta_1 \leq n_1 \\ \gamma_1, \beta_1 \in \Delta_1}}z_{\gamma_1}z_{\beta_1} - \sum_{i=2}^{k}\sum_{\substack{\gamma_i \neq \beta_i\\ \gamma_i, \beta_i \in \Delta_i}}z_{\gamma_i}z_{\beta_i}\Big] 
\end{align}
Applying the lemma \ref{lemmadelta} to \eqref{wtchen6} yields the following result
\begin{align*}
2 \bar{z}_1\bar{z}_2 \geq \bar{\xi}, 
\end{align*}
where
\begin{align*}
\bar{\xi} =&~ \xi + \sum_{1 \leq i \neq j \leq n}(h_{ij}^{n+1})^2 + \sum_{r=n+2}^{m}\sum_{i, j=1}^{n}(h_{ij}^r)^2 - &\sum_{\substack{2\leq \gamma_1 \neq \beta_1 \leq n_1 \\ \gamma_1, \beta_1 \in \Delta_1}}z_{\gamma_1}z_{\beta_1} - \sum_{i=2}^{k}\sum_{\substack{\gamma_i \neq \beta_i\\ \gamma_i, \beta_i \in \Delta_i}}z_{\gamma_i}z_{\beta_i}.
\end{align*}
Hence, we obtain
\begin{align}\label{wtchen7}
\sum_{i=1}^{k}\sum_{\substack{\gamma_i < \beta_i\\ \gamma_i, \beta_i \in \Delta_i}}z_{\gamma_i}z_{\beta_i} \geq \frac{\xi}{2} + \sum_{1 \leq i < j \leq n}(h_{ij}^{n+1})^2 + \frac{1}{2} \sum_{r=n+2}^{m}\sum_{i, j=1}^{n}(h_{ij}^r)^2.
\end{align}
Consider $\Delta = \Delta_1 \cup ... \cup \Delta_k, ~  \Delta^2 = (\Delta_1 \times \Delta_1) \cup ... \cup (\Delta_k \times \Delta_k),$\\
$\Delta_{k+1} = \{n_1+...+n_k+1, ..., n\}.$ From \eqref{wtchen1} and \eqref{wtchen7}, we get
\begin{align}\label{wtchen10}
\sum_{i=1}^{k}\tau(\mathcal{L}_i) \geq & \frac{(c_1+c_2)}{4\alpha^2} \sum_{i=1}^{k}n_i(n_i-1)(p^2+2q) + \frac{(c_1+c_2)}{2\alpha^2} \sum_{i=1}^{k}(n_i-1)(n_i~ tr^2\phi - p~ tr\phi)  \nonumber \\
 &- \frac{k(c_1+c_2)}{2\alpha^2}(\vartheta_1 + \vartheta_2) + \frac{(c_1-c_2)}{4\alpha}  \sum_{i=1}^{k}(n_i-1)(2~ tr\phi - n_ip) + \frac{\xi}{2}  \nonumber \\
 &+ \sum_{r=n+1}^{m}\sum_{(\gamma, \beta) \notin \Delta^2}(h_{\gamma \beta}^r)^2 + \frac{1}{2} \sum_{r=n+2}^{m}\sum_{i=1}^{k}(\sum_{\gamma_i \in \Delta_i}h_{\gamma_i\gamma_i}^r)^2 + \frac{1}{2} \sum_{r=n+2}^{m}\sum_{i \in \Delta_{k+1}}(h_{ii}^r)^2,
\end{align}
which implies
\begin{align}\label{wtchen11}
\sum_{i=1}^{k}\tau(\mathcal{L}_i) \geq & \frac{(c_1+c_2)}{4\alpha^2} \sum_{i=1}^{k}n_i(n_i-1)(p^2+2q) + \frac{(c_1+c_2)}{2\alpha^2} \sum_{i=1}^{k}(n_i-1)(n_i~ tr^2\phi - p~ tr\phi)  \nonumber \\
 &- \frac{k(c_1+c_2)}{2\alpha^2}(\vartheta_1 + \vartheta_2) + \frac{(c_1-c_2)}{4\alpha}  \sum_{i=1}^{k}(n_i-1)(2~ tr\phi - n_ip) + \frac{\xi}{2}.  
 \end{align}
 Hence, using \eqref{wtchen3} and \eqref{wtchen11}, we obtain the desired inequality \eqref{wc1}. By means of lemma \ref{lemmadelta} and \eqref{wtchen11}, we observe that the equality in \eqref{wc1} holds if and only if we have
 \begin{itemize}
 \item [(i)] $\sum_{i \in \Delta_i}h_{ii}^{n+1} = \nu,$ $1 \leq i \leq k,$ where $\nu$ is some real function on $N,$
 \item [(ii)] $h_{ij}^r = 0,$ $\forall~ r = n+1, ..., m$ and $(i, j) \notin \Delta^2,$
 \item [(iii)] $\sum_{\gamma_i \in \Delta_i}h_{\gamma_i\gamma_i}^r = 0,$ $\forall r = n+2, ..., m$ and $1 \leq i \leq k,$ 
  \item [(iv)] $h_{ii}^r = 0,~ \forall~i \in \Delta_{k+1}.$
 \end{itemize}
 The above conditions imply that the equality in \eqref{wc1} holds if and only if for some suitable orthonormal basis $\{e_1, ..., e_n\}$ of $T_xN$ and $\{e_{n+1}, ..., e_m\}$ of $(T_xN)^{\perp}$, the shape operator is given by \eqref{wce2}. Therefore, we get the assertion.
\end{proof}

\begin{corollary}\label{cor3.2}
		Let $N$ be an $n$-dimensional submanifold (as given in Table \ref{Table 1}) of a locally metallic product space form $M = (M_1(c_1) \times M_2(c_2), g, \phi$) of dimension $m$. Then, the following optimal relationships for the different types of slant submanifolds (given in Table \ref{Table 3.2}) hold and the equality holds if and only if for some suitable orthonormal basis $\{e_1, ..., e_n\}$ of $T_xN$ and $\{e_{n+1}, ..., e_m\}$ of $(T_xN)^{\perp}$, the shape operator is given by \eqref{wce2}.
		\end{corollary}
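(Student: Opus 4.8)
The plan is to obtain Corollary \ref{cor3.2} as a direct specialization of Theorem \ref{wcth1}, reading off the slant angles from Table \ref{Table 1} and inserting them into the master inequality \eqref{wc1}. The crucial observation is that the bi-slant geometry enters \eqref{wc1} only through the two quantities $\vartheta_1 = \cos^2\theta_1(p\,tr TP_1 + d_1 q)$ and $\vartheta_2 = \cos^2\theta_2(p\,tr TP_2 + d_2 q)$ introduced in the proof of that theorem; all the remaining terms depend only on the ambient constants $c_1, c_2, p, q, \alpha$, the combinatorial coefficients $b(n_1,\dots,n_k)$, $c(n_1,\dots,n_k)$, $d(n_1,\dots,n_k)$, and the intrinsic invariants $tr\phi$ and $tr^2\phi$, none of which change under specialization. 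Hence for each of the four submanifold types it suffices to evaluate $\vartheta_1$ and $\vartheta_2$ at the prescribed angles and substitute back.

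Carrying this out, I would treat the cases as follows. For a semi-slant submanifold $\theta_1 = 0$, so $\cos^2\theta_1 = 1$ and $\vartheta_1 = p\,tr TP_1 + d_1 q$, while $\theta_2 = \theta$ leaves $\vartheta_2 = \cos^2\theta(p\,tr TP_2 + d_2 q)$. For a hemi-slant submanifold $\theta_2 = \tfrac{\pi}{2}$ forces $\cos^2\theta_2 = 0$, so $\vartheta_2$ vanishes and only $\vartheta_1 = \cos^2\theta(p\,tr TP_1 + d_1 q)$ survives. For a semi-invariant submanifold both simplifications occur at once: $\theta_1 = 0$ gives $\vartheta_1 = p\,tr TP_1 + d_1 q$ and $\theta_2 = \tfrac{\pi}{2}$ gives $\vartheta_2 = 0$. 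Finally, in the slant case one distribution is trivial; taking $\mathcal{D}_2 = 0$ gives $P_1 = I$ on $TN$, hence $d_1 = n$ and $tr TP_1 = tr T$, so with the single slant angle $\theta_1 = \theta$ we get $\vartheta_1 = \cos^2\theta(p\,tr T + nq)$ and $\vartheta_2 = 0$. Substituting these values of $\vartheta_1, \vartheta_2$ into \eqref{wc1} yields the four inequalities tabulated in Table \ref{Table 3.2}.

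Since Theorem \ref{wcth1} already holds for arbitrary bi-slant submanifolds, no new estimation is required and the equality characterization transfers verbatim: conditions (i)--(iv) in the proof of Theorem \ref{wcth1}, together with the resulting shape-operator normal form \eqref{wce2}, were derived solely from Lemma \ref{lemmadelta} and make no reference to $\theta_1$ or $\theta_2$. Therefore equality in each specialized inequality holds at $x \in N$ if and only if the shape operator takes the form \eqref{wce2} with respect to a suitable orthonormal basis $\{e_1, \dots, e_n\}$ of $T_xN$ and $\{e_{n+1}, \dots, e_m\}$ of $(T_xN)^{\perp}$. The only point demanding a little care is the slant case, where one must correctly identify $P_1$ with the identity on $TN$ and update the distribution dimension to $d_1 = n$; this is the sole place where the specialization is not a purely mechanical substitution, and I expect it to be the main (minor) obstacle.
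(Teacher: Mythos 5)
Your proposal is correct and takes essentially the same approach the paper intends: the paper offers no separate proof of Corollary \ref{cor3.2}, treating it as an immediate specialization of Theorem \ref{wcth1}, which is exactly your substitution of the angles from Table \ref{Table 1} into \eqref{wc1}, including the correct handling of the slant case via $P_1 = I$, $d_1 = n$, $tr\, TP_1 = tr\, T$. Your observation that the equality characterization transfers verbatim (since conditions (i)--(iv) and the normal form \eqref{wce2} are derived from Lemma \ref{lemmadelta} independently of $\theta_1, \theta_2$) is also the implicit reasoning behind the paper's statement.
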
	
	\begin{table}[H]    
	\begin{center}
		\caption{Optimal inequalities involving $\delta$-invariants for submanifolds of locally metallic product space form}
		\label{Table 3.2}
		\vspace{.06cm}
			\begin{tabular}{|l| p{10.5cm}|}
				\hline
				Type of $N$~~ & Optimal inequalities involving $\delta$-invariants\\
				\hline
				Semi-slant & $\delta(n_1, ..., n_k) \leq \frac{(c_1+c_2)}{2\alpha^2}[b(n_1, ..., n_k)(p^2+2q+2~tr^2\phi) - d(n_1, ..., n_k)p~tr\phi + (k-1)(p~ tr TP_1 +d_1q + \cos^2\theta(p~ tr TP_2 + d_2q))] + \frac{(c_1-c_2)}{4\alpha}[2~tr\phi ~d(n_1, ..., n_k) - p~b(n_1, ..., n_k)] + c(n_1, ..., n_k)||\mathcal{H}||^2$\\
				\hline
				Hemi-slant & $\delta(n_1, ..., n_k) \leq \frac{(c_1+c_2)}{2\alpha^2}[b(n_1, ..., n_k)(p^2+2q+2~tr^2\phi) - d(n_1, ..., n_k)p~tr\phi + (k-1) \cos^2\theta (p~ tr TP_1 +d_1q)] + \frac{(c_1-c_2)}{4\alpha}[2~tr\phi ~d(n_1, ..., n_k) - p~b(n_1, ..., n_k)] + c(n_1, ..., n_k)||\mathcal{H}||^2$\\
				\hline
				Semi-invariant & $\delta(n_1, ..., n_k) \leq \frac{(c_1+c_2)}{2\alpha^2}[b(n_1, ..., n_k)(p^2+2q+2~tr^2\phi) - d(n_1, ..., n_k)p~tr\phi + (k-1)(p~ tr TP_1 +d_1q)] + \frac{(c_1-c_2)}{4\alpha}[2~tr\phi~ d(n_1, ..., n_k) - p~b(n_1, ..., n_k)] + c(n_1, ..., n_k)||\mathcal{H}||^2$\\
				\hline
				Slant & $\delta(n_1, ..., n_k) \leq \frac{(c_1+c_2)}{2\alpha^2}[b(n_1, ..., n_k)(p^2+2q+2~tr^2\phi) - d(n_1, ..., n_k)p~tr\phi + (k-1)\cos^2\theta (p~ tr T + nq)] + \frac{(c_1-c_2)}{4\alpha}[2~tr\phi ~d(n_1, ..., n_k) - p~b(n_1, ..., n_k)] + c(n_1, ..., n_k)||\mathcal{H}||^2$\\
				\hline
			\end{tabular}	
		\end{center}
	\end{table}	
%


	\subsection{Optimal inequalities involving Ricci curvature and shape operator}\label{sec2.3w}
This section comprises the optimal inequalities involving Ricci curvature and shape operator of submanifolds in metallic Riemannian space forms.
	
\begin{theorem}\label{wriccith}
	Let $N$ be an $n$-dimensional bi-slant submanifold of a locally metallic product space form $M = (M_1(c_1) \times M_2(c_2), g, \phi$) of dimension $m$. Then, for an integer $k,$ $2 \leq k \leq n$, we have
	\begin{align}\label{shapeth1}	
	(tr(\mathcal{A}_H))^2 \geq &~ n(n-1)\Omega_k + \omega + \frac{(c_1+c_2)}{\alpha^2}[\cos^2{\theta_1}(p~tr TP_1 + d_1q)   \nonumber \\
&+ \cos^2\theta_2(p~tr TP_2 + d_2q)],
	\end{align}
	where
	\begin{align*}
	& \omega = n\omega_1 + [(n-1)\omega_3 - \omega_2] tr\phi - \frac{n(n-1)(c_1+ c_2)}{\alpha^2} tr^2\phi, \\
	& \omega_1 = \frac{p(n-1)( c_1- c_2)}{2\alpha} - \frac{(n-1)(c_1+c_2)(p^2+2q)}{2\alpha^2},\\
	& \omega_2 = \frac{(n-1)}{2\alpha^2}[p(c_1+c_2) + \alpha(c_1- c_2)],~ \omega_3 = \frac{p(c_1+c_2) - \alpha(c_1- c_2)}{2\alpha^2}
	 \end{align*}
and the equality in \eqref{shapeth1} holds if and only if the Ricci curvature of all the $k$-plane sections of $T_xN$ is constant and $\mathcal{A}_r = 0,~ \forall r \in \{n+2, ..., m\}.$ 
	\end{theorem}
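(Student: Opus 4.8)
The plan is to derive the stated inequality from two essentially independent facts: a purely intrinsic lower bound $2\tau \ge n(n-1)\Omega_k$ for the scalar curvature in terms of the invariant $\Omega_k$ of Definition \ref{omegadef}, together with the extrinsic expression for $2\tau$ coming from the Gauss equation.

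First I would prove $2\tau \ge n(n-1)\Omega_k$ by an averaging argument. Fix an orthonormal basis $\{e_1,\dots,e_n\}$ of $T_xN$. For every $k$-element index set $J$ and every $i\in J$, the section $\mathcal{L}_J=\mathrm{span}\{e_j:j\in J\}$ together with the unit vector $e_i$ satisfies, by Definition \ref{omegadef}, $(k-1)\Omega_k\le Ric_{\mathcal{L}_J}(e_i)=\sum_{j\in J,\,j\ne i}K(e_i\wedge e_j)$. Summing these inequalities over all $\binom{n}{k}$ choices of $J$ and all $k$ distinguished directions $i\in J$, each sectional curvature $K(e_i\wedge e_j)$ is counted exactly $2\binom{n-2}{k-2}$ times, so the accumulated right-hand side reduces to $2\binom{n-2}{k-2}\tau$ while the left-hand side is $\binom{n}{k}\,k(k-1)\Omega_k$. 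Since $\binom{n}{k}k(k-1)=n(n-1)\binom{n-2}{k-2}$, cancelling $\binom{n-2}{k-2}$ and $2$ gives $2\tau\ge n(n-1)\Omega_k$, with equality if and only if $Ric_{\mathcal{L}_J}(e_i)=(k-1)\Omega_k$ for every admissible section, i.e. the $k$-plane Ricci curvature is constant.

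Next I would record the extrinsic side. Contracting the Gauss equation \eqref{w9} against the ambient curvature tensor \eqref{w8} and simplifying the $\phi$-terms through the slant relation \eqref{theta1} produces the same scalar-curvature identity \eqref{wtchen2} already established for Theorem \ref{wcth1}, which I abbreviate as $2\tau = \Theta + n^2\|\mathcal{H}\|^2 - \|h\|^2$, where $\Theta$ collects the entire $(c_1\pm c_2)$-curvature block built from $p,q,\alpha$, $tr\phi$, $tr^2\phi$, $tr TP_1$, $tr TP_2$ and the slant angles. I would then select the normal frame so that $e_{n+1}$ is parallel to $\mathcal{H}$; then $tr\mathcal{A}_H=n\|\mathcal{H}\|$ and $tr\mathcal{A}_r=0$ for $r\ge n+2$, whence $(tr\mathcal{A}_H)^2=n^2\|\mathcal{H}\|^2$ and $\|h\|^2=\|\mathcal{A}_{n+1}\|^2+\sum_{r=n+2}^m\|\mathcal{A}_r\|^2$.

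Finally I would combine the two ingredients. Solving the identity for $n^2\|\mathcal{H}\|^2$ and inserting $2\tau\ge n(n-1)\Omega_k$ gives $(tr\mathcal{A}_H)^2=2\tau-\Theta+\|h\|^2\ge n(n-1)\Omega_k-\Theta$, after discarding the non-negative second fundamental form contributions in the normal directions $e_{n+2},\dots,e_m$. It then remains to regroup $-\Theta$ into the combination $\omega+\tfrac{(c_1+c_2)}{\alpha^2}\big[\cos^2\theta_1(p\,tr TP_1+d_1q)+\cos^2\theta_2(p\,tr TP_2+d_2q)\big]$; matching the coefficients of $tr\phi$, $tr^2\phi$ and the constant term against the definitions of $\omega_1,\omega_2,\omega_3$ is the main bookkeeping obstacle, whereas the averaging identity of the first paragraph is the conceptual crux. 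For the equality discussion, both estimates must be sharp at once: sharpness of $2\tau\ge n(n-1)\Omega_k$ forces the Ricci curvature of every $k$-plane section to be constant, and vanishing of the discarded normal contributions forces $\mathcal{A}_r=0$ for all $r\in\{n+2,\dots,m\}$, which is precisely the asserted characterization.
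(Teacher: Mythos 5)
Your two building blocks are individually fine: the double-counting proof of $2\tau \ge n(n-1)\Omega_k$ is correct (and its equality analysis gives the constancy of the $k$-plane Ricci curvatures), and the identity you call $2\tau = \Theta + n^2\parallel\mathcal{H}\parallel^2 - \parallel h\parallel^2$ is exactly \eqref{wtchen2}. Note, however, that this is already a genuinely different route from the paper's: the paper never passes through the global scalar-curvature identity, but works direction-by-direction, using \eqref{shape2} to bound each product $a_j\,tr(\mathcal{A}_{n+1})$ below by $(n-1)\Omega_k$ plus curvature terms attached to $e_j$ (equations \eqref{shape3}--\eqref{shape6}) and then summing over $j$. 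The first genuine gap in your proposal is the step you postponed as ``bookkeeping'': it is not bookkeeping, and it fails. The identity you need, $-\Theta = \omega + \tfrac{(c_1+c_2)}{\alpha^2}\big[\cos^2\theta_1(p\,tr TP_1+d_1q)+\cos^2\theta_2(p\,tr TP_2+d_2q)\big]$, is false as an algebraic statement: expanding \eqref{wtchen2}, the coefficient of $tr\phi$ in $-\Theta$ is $\tfrac{(n-1)p(c_1+c_2)}{\alpha^2}-\tfrac{(n-1)(c_1-c_2)}{\alpha}$, whereas the coefficient of $tr\phi$ in $\omega$ is $(n-1)\omega_3-\omega_2 = -\tfrac{(n-1)(c_1-c_2)}{\alpha}$, so the two sides differ by the term $\tfrac{(n-1)p(c_1+c_2)}{\alpha^2}\,tr\phi$. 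Your argument therefore proves an inequality whose right-hand side is not the one in \eqref{shapeth1}; this discrepancy traces to the asymmetric coefficients $\omega_2$, $\omega_3$ that the paper's per-direction computation attaches to $g(\phi e_j,e_j)$ versus $\sum_{i\ne j}g(\phi e_i,e_i)$, and it cannot be absorbed by regrouping. Whether the fault ultimately lies with the theorem's $\omega$ or with \eqref{wtchen2} is a question about the paper, but as a proof of the stated theorem your proposal does not close at precisely this point.

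The second gap is the equality case, which is inconsistent with your own estimate. In your chain the discarded quantity is all of $\parallel h\parallel^2 = \parallel\mathcal{A}_{n+1}\parallel^2 + \sum_{r\ge n+2}\parallel\mathcal{A}_r\parallel^2$, not merely the ``contributions in the normal directions $e_{n+2},\dots,e_m$'': it contains $\sum_i a_i^2$, the squared norm of the shape operator in the mean-curvature direction. Consequently, equality in your inequality forces $h=0$ at the point (hence $tr(\mathcal{A}_H)=0$), a far stronger and essentially degenerate condition, and the stated characterization ($\mathcal{A}_r=0$ only for $r\ge n+2$, plus constant $k$-plane Ricci curvature) does not follow from your argument. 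The paper's per-direction route is what makes its weaker equality condition visible: since $tr(\mathcal{A}_r)=0$ for $r\ge n+2$, the cross terms $-h_{jj}^r\sum_{i\ne j}h_{ii}^r$ in \eqref{shape2} combine into $+(h_{jj}^r)^2$, so the only squares discarded in \eqref{shape4}--\eqref{shape5} are those of the trace-free operators $\mathcal{A}_r$, $r\ge n+2$; the analogous refinement is invisible once you have collapsed everything into $\parallel h\parallel^2$. To salvage your approach you would have to keep $\parallel\mathcal{A}_{n+1}\parallel^2$ on the favorable side of the inequality (e.g., via $n^2\parallel\mathcal{H}\parallel^2 \le n\sum_i a_i^2$, the Cauchy--Schwarz step used in Theorem \ref{wriccilemma}) rather than discard it, which changes the shape of the argument.
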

	\begin{proof}
	Choose an orthonormal basis $\{e_1, ..., e_n\}$ of the tangent space $T_xN$ and $\{e_{n+1}, ..., e_m\}$ of the normal space $(T_xN)^{\perp}$ of $N$ at $x \in N$ such that $e_{n+1}$ is parallel to the mean curvature vector and $\{e_1, ..., e_n\}$ diagonalize the shape operator $\mathcal{A}_{n+1}.$ Then, we have
\begin{align}\label{shape1}
\mathcal{A}_{n+1} = \mathrm{diag}(a_1, \dots, a_n), ~\mathcal{A}_{r} = [h_{ij}^r], ~\sum_{i=1}^{n}h_{ii}^r = 0,~\forall r \in \{n+2, ..., m\}.
\end{align}
From \eqref{w6}, \eqref{w7}, \eqref{w9}, we have
\begin{align}\label{shape2}
a_ia_j =& K(e_i \wedge e_j) - \frac{(c_1 + c_2)}{4\alpha^2}\big\{\alpha^2 + 4[g(\phi e_j, e_j) g(\phi e_i, e_i) - [g(\phi e_i, e_j)]^2] + p^2  \nonumber \\
& - 2p[g(\phi e_j, e_j) + g(\phi e_i, e_i)] \big\} - \frac{(c_1 - c_2)}{2\alpha}\big[g(\phi e_j, e_j) + g(\phi e_i, e_i) - p\big]  \nonumber \\
& + \sum_{r=n+2}^{m}[(h_{ij}^r)^2 - h_{ii}^rh_{jj}^r].
\end{align}
Let $\{e_{i_1}, ..., e_{i_k}\}$ denote the basis of the $k$-plane section $\mathcal{L}_{i_1, ..., i_k}$. Then, taking $i=1$ and $j = i_2, ..., i_l$, we obtain
\begin{align}\label{shape3}
a_1\sum_{j=2}^{k}a_{i_j} =& Ric_{\mathcal{L}_{i_1, ..., i_k}}(e_1) - \frac{(c_1 + c_2)}{4}(k-1) - \bigg[\frac{(c_1 + c_2)}{\alpha^2}g(\phi e_1, e_1) + \frac{p(c_1 + c_2)}{2\alpha^2}   \nonumber \\
&- \frac{(c_1 - c_2)}{2\alpha} \bigg]\sum_{j=2}^{k} g(\phi e_{i_j}, e_{i_j}) + \frac{(c_1 + c_2)}{\alpha^2}\sum_{j=2}^{k} g(\phi e_1, e_{i_j})^2 + \frac{(k-1)}{2\alpha}    \nonumber \\
&\bigg[\frac{p(c_1 + c_2)}{\alpha} - c_1 + c_2\bigg]g(\phi e_1, e_1) - \frac{(k-1)p}{2\alpha}\bigg[\frac{p(c_1 + c_2)}{2\alpha} - c_1 + c_2\bigg]  \nonumber \\
&+ \sum_{r=n+2}^{m}\sum_{j=2}^{k} [(h_{1{i_j}}^r)^2 - h_{11}^rh_{{i_j}{i_j}}^r].
\end{align}	
Summing up over all such $k$-plane sections, we get
\begin{align}\label{shape4}
a_1\sum_{i=1}^{n}a_i =& \frac{1}{{}^{n-2}C_{k-2}}\sum_{2 \leq i_2 < ... < i_k \leq n} Ric_{\mathcal{L}_{i_1, ..., i_k}}(e_1) + \omega_1 - \omega_2g(\phi e_1, e_1) + \bigg[\omega_3    \nonumber \\
&- \frac{(c_1 + c_2)}{\alpha^2}g(\phi e_1, e_1)\bigg] \sum_{i=2}^{n} g(\phi e_{i}, e_{i}) + \frac{(c_1 + c_2)}{\alpha^2}\sum_{i=2}^{n} g(\phi e_1, e_{i})^2    \nonumber \\
&+ \sum_{r=n+2}^{m}\sum_{i=1}^{n} [(h_{1i}^r)^2. 
\end{align}
Using definition \ref{omegadef}, we obtain
\begin{align}\label{shape5}
a_1\sum_{i=1}^{n}a_i \geq &(n-1)\Omega_k + \omega_1 - \omega_2g(\phi e_1, e_1) + \bigg[\omega_3 - \frac{(c_1 + c_2)}{\alpha^2}g(\phi e_1, e_1)\bigg]   \nonumber \\
&\sum_{i=2}^{n}  g(\phi e_{i}, e_{i}) + \frac{(c_1 + c_2)}{\alpha^2}\sum_{i=2}^{n} g(\phi e_1, e_{i})^2.    
\end{align}
Similar result holds if $i \in \{2, ..., n\}$ in \eqref{shape2}. Hence, we obtain
\begin{align}\label{shape6}
a_j\sum_{i=1}^{n}a_i \geq &(n-1)\Omega_k + \omega_1 - \omega_2g(\phi e_j, e_j) + \bigg[\omega_3 - \frac{(c_1 + c_2)}{\alpha^2}g(\phi e_j, e_j)\bigg]   \nonumber \\
&\sum_{\substack{i=2 \\ i \neq j}}^{n}  g(\phi e_{i}, e_{i}) + \frac{(c_1 + c_2)}{\alpha^2}\sum_{\substack{i=2 \\ i \neq j}}^{n} g(\phi e_j, e_{i})^2,~ 1\leq j \leq n.  
\end{align}
Taking summation over $1 \leq j \leq n$ in \eqref{shape6}, we get
\begin{align}\label{shape8}
(tr(\mathcal{A}_H))^2 \geq &~ n(n-1)\Omega_k + \omega + \frac{(c_1+c_2)}{\alpha^2}[\cos^2{\theta_1}(p~tr TP_1 + d_1q)   \nonumber \\
&+ \cos^2\theta_2(p~tr TP_2 + d_2q)],
\end{align}
which proves the desired inequality \eqref{shapeth1}. This gives the relationship for the lower bound of eigen values of the shape operator $\mathcal{A}_H$. Also from \eqref{shape1}, \eqref{shape5} and \eqref{shape8}, we conclude that the equality in \eqref{shapeth1} holds if and only if the Ricci curvature of all the $k$-plane sections of $T_xN$ vanishes and $\mathcal{A}_r = 0,~ \forall r \in \{n+2, ..., m\}.$
	\end{proof}

\begin{corollary}\label{cor3.33}
		Let $N$ be an $n$-dimensional submanifold (as given in Table \ref{Table 1}) of a locally metallic product space form $M = (M_1(c_1) \times M_2(c_2), g, \phi$) of dimension $m$. Then, the following optimal relationships involving shape operator and Ricci curvature for the different types of slant submanifolds (given in Table \ref{Table 3.5}) hold and the equality holds if and only if the Ricci curvature of all the $k$-plane sections of $T_xN$ vanishes and $\mathcal{A}_r = 0,~ \forall r \in \{n+2, ..., m\}.$ 
\end{corollary}

\begin{table}[H]    
	\begin{center}
		\caption{Optimal inequalities involving Ricci curvature and shape operator for submanifolds of locally metallic product space form}
		\label{Table 3.5}
		\vspace{.06cm}
			\begin{tabular}{|l| p{10.5cm}|}
				\hline
				Type of $N$~~ & Optimal inequalities involving Ricci curvature and shape operator\\
				\hline
				Semi-slant & $(tr(\mathcal{A}_H))^2 \geq n(n-1)\Omega_k + \omega + \frac{(c_1+c_2)}{\alpha^2}[p~tr TP_1 + d_1q + \cos^2\theta(p~tr TP_2 + d_2q)]$\\
				\hline
				Hemi-slant & $(tr(\mathcal{A}_H))^2 \geq n(n-1)\Omega_k + \omega + \frac{(c_1+c_2)}{\alpha^2}\cos^2{\theta}(p~tr TP_1 + d_1q)$\\
				\hline
				Semi-invariant & $(tr(\mathcal{A}_H))^2 \geq n(n-1)\Omega_k + \omega + \frac{(c_1+c_2)}{\alpha^2}(p~tr TP_1 + d_1q)$\\
				\hline
				Slant & $(tr(\mathcal{A}_H))^2 \geq n(n-1)\Omega_k + \omega + \frac{(c_1+c_2)}{\alpha^2}\cos^2{\theta}(p~tr T + nq)$\\
				\hline
			\end{tabular}	
		\end{center}
	\end{table}	

\begin{theorem}\label{wriccilemma}
Let $N$ be an $n$-dimensional bi-slant submanifold of a locally metallic product space form $M = (M_1(c_1) \times M_2(c_2), g, \phi$) of dimension $m$. Then, we have
\begin{align}\label{wricci1}
n(n-1) \parallel H \parallel^2\geq&~ 2\tau - \frac{n(n-1)(c_1+c_2)}{2\alpha^2}(p^2+2q) + \frac{(c_1+c_2)}{\alpha^2}\big[\cos^2\theta_1(p~ tr TP_1  \nonumber \\ 
&+ d_1q) + \cos^2\theta_2(p~ tr TP_2 + d_2q) - (n-1)(n~ tr^2\phi - p~ tr\phi) \big]   \nonumber \\ 
&- \frac{(n-1)(c_1-c_2)}{2\alpha}(2~ tr\phi - np)
\end{align}	
and the equality holds at a point $x \in N$ if and only if $x$ is a totally umbilical point.
\end{theorem}
\begin{proof}
Choose an orthonormal basis $\{e_1, ..., e_n\}$ of the tangent space $T_xN$ and $\{e_{n+1}, ..., e_m\}$ of the normal space $(T_xN)^{\perp}$ of $N$ at $x \in N$ such that $e_{n+1}$ is parallel to the mean curvature vector and $\{e_1, ..., e_n\}$ diagonalize the shape operator $\mathcal{A}_{n+1}.$ Using \eqref{w9}, \eqref{w8} and \eqref{theta1}, we obtain 
\begin{align}\label{wricci3}
n^2 \parallel H \parallel^2~=~& 2\tau + \sum_{i=1}^{n}(a_i)^2 + \sum_{i, j=1}^{n}\sum_{r=n+2}^{m}(h_{ij}^{r})^2 - \frac{n(n-1)(c_1+c_2)}{2\alpha^2}(p^2+2q)  \nonumber \\ 
&+ \frac{(c_1+c_2)}{\alpha^2}\big[\cos^2\theta_1(p~ tr TP_1 + d_1q) + \cos^2\theta_2(p~ tr TP_2 + d_2q)  \nonumber \\ 
&- (n-1)(n~ tr^2\phi - p~ tr\phi) \big] - \frac{(n-1)(c_1-c_2)}{2\alpha}(2~ tr\phi - np),
\end{align}
where $a_1, ..., a_n$ are eigen values of $\mathcal{A}_{n+1}.$
Using Cauchy-Schwarz inequality, we have
\begin{align}\label{wricci114}
\sum_{i=1}^{n}(a_i)^2 \geq n \parallel H \parallel^2.
\end{align}
Using \eqref{wricci114} in \eqref{wricci3}, we obtain the desired optimal relation \eqref{wricci1}. The equality holds at a point $x \in N$ if and only if the following two conditions hold
\begin{itemize}
\item[(i)] $a_i = a_j$ $\forall$ $i, j = 1, ..., n,$
\item[(ii)] $h_{ij}^{r} = 0$$\forall$ $i, j = 1, ..., n, r \in \{n+2, ..., m\}.$
\end{itemize}
Therefore, we conclude that the equality in \eqref{wricci1} holds at a point $x \in N$ if and only if $x$ is a totally umbilical point. Hence, we get the assertion.
\end{proof}

\begin{corollary}\label{cor3.34}
		Let $N$ be an $n$-dimensional submanifold (as given in Table \ref{Table 1}) of a locally metallic product space form $M = (M_1(c_1) \times M_2(c_2), g, \phi$) of dimension $m$. Then, the following optimal relationships for the different types of slant submanifolds (given in Table \ref{Table 3.6}) hold and the equality holds at a point $x \in N$ if and only if $x$ is a totally umbilical point.
\end{corollary}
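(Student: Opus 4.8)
The plan is to obtain each row of Table \ref{Table 3.6} as a direct specialization of the master inequality \eqref{wricci1} proved in Theorem \ref{wriccilemma}. The key observation is that the only quantities in \eqref{wricci1} carrying the bi-slant data are the two bracketed terms $\cos^2\theta_1(p~ tr TP_1 + d_1q)$ and $\cos^2\theta_2(p~ tr TP_2 + d_2q)$; every remaining term is an ambient or intrinsic invariant ($\tau$, $\parallel H \parallel^2$, $tr\phi$, $tr^2\phi$, and the $c_1,c_2,p,q,\alpha$ constants) that is untouched by the choice of the two distributions. So the first step is simply to read off the slant angles $\theta_1,\theta_2$ for each class from Table \ref{Table 1} and substitute.

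Concretely, for a semi-slant submanifold $\mathcal{D}_1$ is invariant ($\theta_1=0$, hence $\cos^2\theta_1=1$) and $\mathcal{D}_2$ is slant ($\theta_2=\theta$), so the bracket becomes $p~tr TP_1 + d_1q + \cos^2\theta(p~tr TP_2 + d_2q)$. For a hemi-slant submanifold $\mathcal{D}_1$ is slant ($\theta_1=\theta$) and $\mathcal{D}_2$ is anti-invariant ($\theta_2=\tfrac{\pi}{2}$, hence $\cos^2\theta_2=0$), which annihilates the second term and leaves $\cos^2\theta(p~tr TP_1 + d_1q)$. For a semi-invariant submanifold $\theta_1=0$ and $\theta_2=\tfrac{\pi}{2}$, leaving only $p~tr TP_1 + d_1q$. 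In each case I would insert the reduced bracket back into \eqref{wricci1} and record the resulting inequality.

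The slant case requires a short separate remark, since here one distribution vanishes (say $\mathcal{D}_2=0$): then $P_1$ is the identity on $TN$, the tangential operator $TP_1$ collapses to $T$, the dimension $d_1$ becomes $n$, and the common slant angle $\theta_1=\theta$ turns the bracket into $\cos^2\theta(p~tr T + nq)$. I would verify this degenerate reduction explicitly and write down the corresponding inequality. Finally, for the equality discussion I would note that the equality analysis in Theorem \ref{wriccilemma} rests solely on the Cauchy--Schwarz step \eqref{wricci114} (forcing $a_1=\cdots=a_n$) together with the vanishing of $h_{ij}^{r}$ for $r\in\{n+2,\dots,m\}$; neither condition references the slant angles. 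Hence the equality in every specialized inequality holds precisely when $x$ is a totally umbilical point, exactly as in the bi-slant statement. I do not expect any genuine obstacle here, as the corollary is a bookkeeping specialization; the only point demanding care is the correct identification of the dimensions $d_i$ and the reduction $TP_1\mapsto T$, $d_1\mapsto n$ in the degenerate slant case.
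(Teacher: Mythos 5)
Your proposal is correct and matches the paper's (implicit) proof exactly: the corollary is obtained by substituting the slant angles from Table \ref{Table 1} into the inequality \eqref{wricci1} of Theorem \ref{wriccilemma}, with the degenerate slant case handled by the reduction $TP_1 \mapsto T$, $d_1 \mapsto n$, and the equality characterization carries over verbatim since the Cauchy--Schwarz argument in \eqref{wricci114} is independent of the slant angles.
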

\begin{table}[H]   
	\begin{center}
		\caption{Optimal inequalities involving squared mean curvature and scalar curvature for submanifolds of locally metallic product space form}
		\label{Table 3.6}
		\vspace{.06cm}
			\begin{tabular}{|l| p{10.4cm}|}
				\hline
				Type of $N$~~ & Optimal inequalities involving squared mean curvature and scalar curvature\\
				\hline
				Semi-slant & $n(n-1) ||H||^2\geq 2\tau - \frac{n(n-1)(c_1+c_2)}{2\alpha^2}(p^2+2q) + \frac{(c_1+c_2)}{\alpha^2}\big[p~tr TP_1 + d_1q + \cos^2\theta(p~tr TP_2 + d_2q) - (n-1)(n~ tr^2\phi - p~ tr\phi) \big] - \frac{(n-1)(c_1-c_2)}{2\alpha}(2~ tr\phi - np)$\\
				\hline
				Hemi-slant & $n(n-1) ||H||^2\geq 2\tau - \frac{n(n-1)(c_1+c_2)}{2\alpha^2}(p^2+2q) + \frac{(c_1+c_2)}{\alpha^2}\big[\cos^2{\theta}(p~tr TP_1 + d_1q) - (n-1)(n~ tr^2\phi - p~ tr\phi) \big] - \frac{(n-1)(c_1-c_2)}{2\alpha}(2~ tr\phi - np)$\\
				\hline
				Semi-invariant & $n(n-1) ||H||^2\geq 2\tau - \frac{n(n-1)(c_1+c_2)}{2\alpha^2}(p^2+2q) + \frac{(c_1+c_2)}{\alpha^2}[(p~tr TP_1 + d_1q) - (n-1)(n~ tr^2\phi - p~ tr\phi)] - \frac{(n-1)(c_1-c_2)}{2\alpha}(2~ tr\phi - np)$\\
				\hline
				Slant & $n(n-1) ||H||^2\geq 2\tau - \frac{n(n-1)(c_1+c_2)}{2\alpha^2}(p^2+2q) + \frac{(c_1+c_2)}{\alpha^2}[\cos^2{\theta}(p~tr T + nq) - (n-1)(n~ tr^2\phi - p~ tr\phi)] - \frac{(n-1)(c_1-c_2)}{2\alpha}(2~ tr\phi - np)$\\
				\hline
			\end{tabular}	
		\end{center}
	\end{table}	

	
	\subsection{Optimal inequalities involving generalized normalized $\delta$-Casorati curvatures}\label{sec2.4w}
In this section, we derive optimal inequalities involving generalized normalized $\delta$-Casorati curvatures for submanifolds of metallic Riemannian spaceform.
\begin{theorem}\label{casorati1}	
Let $N$ be an $n$-dimensional bi-slant submanifold of a locally metallic product space form $M = (M_1(c_1) \times M_2(c_2), g, \phi$) of dimension $m$. Then 
\begin{itemize}
\item [(i)] for any real number $u$ such that $0 < u < n(n-1),$ the generalized normalized $\delta$-Casorati curvature $\delta_{\mathcal{C}}(u; n-1)$ satisfies
\begin{align}\label{cas1}
2\tau \leq & \delta_{\mathcal{C}}(u; n-1) + \frac{n(n-1)(c_1+c_2)}{2\alpha^2}(p^2+2q) + \frac{(c_1+c_2)}{\alpha^2}\big[(n-1)  \nonumber   \\ 
&(n~ tr^2\phi - p~ tr\phi) - \cos^2\theta_1(p~ tr TP_1 +d_1q) - \cos^2\theta_2(p~ tr TP_2 + d_2q)\big]   \nonumber   \\ 
& + \frac{(n-1)(c_1-c_2)}{2\alpha}(2~ tr\phi - np),
\end{align}
\item [(ii)] for any real number $u$ such that $u > n(n-1),$ the generalized normalized $\delta$-Casorati curvature $\hat{\delta}_{\mathcal{C}}(u; n-1)$ satisfies
\begin{align}\label{cas2}
2\tau \leq &  \hat{\delta}_{\mathcal{C}}(u; n-1) + \frac{n(n-1)(c_1+c_2)}{2\alpha^2}(p^2+2q) + \frac{(c_1+c_2)}{\alpha^2}\big[(n-1)  \nonumber   \\ 
&(n~ tr^2\phi - p~ tr\phi) - \cos^2\theta_1(p~ tr TP_1 +d_1q) - \cos^2\theta_2(p~ tr TP_2 + d_2q)\big]   \nonumber   \\ 
& + \frac{(n-1)(c_1-c_2)}{2\alpha}(2~ tr\phi - np)
\end{align}
\end{itemize}
and the equality holds in \eqref{cas1} and \eqref{cas2} at $x \in N$ if and only if the submanifold $N$ is invariantly quasi-umbilical with trivial normal connection in $M$ and for some suitable orthonormal basis $\{e_1, ..., e_n\}$ of $T_xN$ and $\{e_{n+1}, ..., e_m\}$ of $(T_xN)^{\perp}$, the shape operator takes the following form
\begin{equation}\label{cas3}
			\mathcal{A}_{n+1} =
			\begin{pmatrix}  
				a & \cdots & 0         & 0\\
			\vdots  & \ddots & \vdots & \vdots \\
				0 & \cdots & a         & 0\\
				0 & \cdots & 0         &(\frac{n^2-n}{u})a  \\
			\end{pmatrix}, 
			\quad
		\mathcal{A}_{n+2} = ... = \mathcal{A}_{m} = 0.
		\end{equation}

\end{theorem}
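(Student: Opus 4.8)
The plan is to reduce both inequalities to a single algebraic positivity statement about a quadratic form in the components of the second fundamental form, following the Decu--Haesen--Verstraelen scheme. First I would start from the scalar curvature identity \eqref{wt4} already obtained in the proof of Theorem \ref{th1}, and abbreviate the whole ambient-curvature contribution on its right-hand side by a single symbol $\Psi$, so that $2\tau = \Psi + 2\sum_{r}\sum_{i<j}\bigl[h_{ii}^r h_{jj}^r - (h_{ij}^r)^2\bigr]$. Expanding the bracketed sum via $2\sum_{i<j}h_{ii}^rh_{jj}^r = (\sum_i h_{ii}^r)^2 - \sum_i (h_{ii}^r)^2$ and $2\sum_{i<j}(h_{ij}^r)^2 = \sum_{i,j}(h_{ij}^r)^2 - \sum_i (h_{ii}^r)^2$ collapses the extrinsic part to $2\tau - \Psi = n^2\|\mathcal{H}\|^2 - \|h\|^2 = n^2\|\mathcal{H}\|^2 - n\mathcal{C}$, using \eqref{w16} and \eqref{w126}.

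Next I would fix an orthonormal frame with $e_{n+1}$ parallel to $\mathcal{H}$ and, for the hyperplane $W = \mathrm{span}\{e_1,\dots,e_{n-1}\}$, introduce the auxiliary polynomial $\mathcal{P} = u\mathcal{C} + a(u)\mathcal{C}(W) - (2\tau - \Psi)$, which by the previous step equals $(u+n)\mathcal{C} + a(u)\mathcal{C}(W) - n^2\|\mathcal{H}\|^2$. Writing everything in components and inserting the explicit value of $a(u)$, the off-diagonal terms acquire the coefficient $\tfrac{(u+n)(n-1)}{u}$ for indices $i,j \le n-1$ and $\tfrac{u+n}{n}$ for the remaining ones; both are strictly positive for every $u>0$, so they may be discarded. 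What remains is, for each normal direction $r$, the diagonal quadratic form $Q_r = \tfrac{(u+n)(n-1)}{u}\sum_{i=1}^{n-1}(h_{ii}^r)^2 + \tfrac{u+n}{n}(h_{nn}^r)^2 - \bigl(\sum_{i=1}^{n}h_{ii}^r\bigr)^2$, and the whole problem reduces to showing $Q_r \ge 0$.

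To prove $Q_r\ge 0$ I would apply Cauchy--Schwarz to the first $n-1$ diagonal entries, $\bigl(\sum_{i=1}^{n-1}h_{ii}^r\bigr)^2 \le (n-1)\sum_{i=1}^{n-1}(h_{ii}^r)^2$, and then complete the square; this yields $Q_r \ge \bigl(\sqrt{n/u}\,\sum_{i<n}h_{ii}^r - \sqrt{u/n}\,h_{nn}^r\bigr)^2 \ge 0$. Summing $\mathcal{P} = \sum_r(\text{positive off-diagonal part}) + \sum_r Q_r \ge 0$ gives $2\tau - \Psi \le u\mathcal{C} + a(u)\mathcal{C}(W)$. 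Since this holds for an arbitrary hyperplane, I pass to the extremum over $W$: when $0<u<n(n-1)$ one has $a(u)>0$, so the infimum of the right-hand side is $u\mathcal{C} + a(u)\inf_W \mathcal{C}(W) = \delta_{\mathcal{C}}(u;n-1)$, giving \eqref{cas1}; when $u>n(n-1)$ one has $a(u)<0$, so the infimum of the right-hand side is attained at $\sup_W \mathcal{C}(W)$, producing $\hat\delta_{\mathcal{C}}(u;n-1)$ and \eqref{cas2}. Re-inserting $\Psi$ restores the ambient-curvature block displayed in the statement.

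For the equality discussion I would trace equality back through both estimates. Vanishing of the discarded off-diagonal terms forces $h_{ij}^r = 0$ for all $i\ne j$ and all $r$; equality in Cauchy--Schwarz forces $h_{11}^r = \cdots = h_{n-1,n-1}^r$; and equality in the completed square forces $h_{nn}^r = \tfrac{n^2-n}{u}\,h_{11}^r$. For $r\ge n+2$ the choice $e_{n+1}\parallel\mathcal{H}$ makes $\mathrm{tr}\,\mathcal{A}_r = 0$, whence $\bigl[(n-1)+\tfrac{n^2-n}{u}\bigr]h_{11}^r = \tfrac{(n-1)(u+n)}{u}h_{11}^r = 0$ and so $\mathcal{A}_r = 0$; for $r=n+1$ the surviving data are precisely the shape operator \eqref{cas3}, i.e.\ $N$ is invariantly quasi-umbilical with trivial normal connection. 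I expect the main obstacle to be the bookkeeping that makes the single quadratic form $Q_r$ serve both sign regimes of $a(u)$ at once --- the cancellation $\tfrac{u+n}{n}+\tfrac{a(u)}{n-1} = \tfrac{(u+n)(n-1)}{u}$ is exactly what keeps the off-diagonal coefficients positive independently of the sign of $a(u)$ --- together with the careful passage from the fixed-hyperplane inequality to the infimum/supremum in the definition of the generalized normalized $\delta$-Casorati curvatures.
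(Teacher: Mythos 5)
Your proposal is correct, and its overall skeleton --- the reduction $2\tau = \Psi + n^2\|\mathcal{H}\|^2 - n\mathcal{C}$ via the Gauss equation and \eqref{theta1}, the auxiliary quadratic expression attached to a fixed hyperplane, and the final passage to the infimum/supremum over hyperplanes according to the sign of $a(u)$ --- coincides with the paper's. Where you genuinely diverge is the key positivity step. The paper treats its polynomial $\mathcal{V}$ (identical to your $\mathcal{P}$; compare \eqref{cas8} with your expansion) as a function of the components $h_{ij}^r$, solves the linear system \eqref{cas9} for its critical points, and argues from the eigenvalues of the Hessian that $\mathcal{V}$ is a parabolic function whose minimum value is $0$. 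You instead discard the off-diagonal terms, whose coefficients $\tfrac{(u+n)(n-1)}{u}$ and $\tfrac{u+n}{n}$ are positive for every $u>0$, and prove that the residual diagonal form satisfies $Q_r \ge \bigl(\sqrt{n/u}\,\sum_{i<n}h_{ii}^r - \sqrt{u/n}\,h_{nn}^r\bigr)^2 \ge 0$ by Cauchy--Schwarz plus completing the square; the identity $\tfrac{u+n}{n} + \tfrac{a(u)}{n-1} = \tfrac{(u+n)(n-1)}{u}$ and the square decomposition both check out. Your route is more elementary and self-contained: it avoids multivariable optimization entirely, it handles both sign regimes of $a(u)$ uniformly, and the equality conditions fall out transparently (vanishing off-diagonal entries, equality in Cauchy--Schwarz, and $h_{nn}^r = \tfrac{n^2-n}{u}\,h_{11}^r$). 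It also supplies a detail the paper leaves implicit: to pass from the equality conditions to the normal form \eqref{cas3} with $\mathcal{A}_{n+2} = \cdots = \mathcal{A}_m = 0$, one needs the frame normalization $e_{n+1} \parallel \mathcal{H}$ so that $\mathrm{tr}\,\mathcal{A}_r = 0$ forces $\mathcal{A}_r = 0$ for $r \ge n+2$, which is exactly your trace argument, whereas the paper never states this choice in its proof. What the paper's optimization approach buys in exchange is a description of the full critical set of $\mathcal{V}$, which is the standard presentation in the $\delta$-Casorati literature; as a verification of the theorem, your argument is, if anything, tighter.
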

\begin{proof}
Consider an orthonormal basis $\{e_1, ..., e_n\}$ of $T_xN$ and $\{e_{n+1}, ..., e_m\}$ of $(T_xN)^{\perp}$ at $x \in N$.
Using  \eqref{w9}, \eqref{w8} and \eqref{theta1}, we obtain the scalar curvature of $N$ as 
\begin{align}\label{cas4}
2 \tau &= \frac{n(n-1)(c_1+c_2)}{2\alpha^2}(p^2+2q) + \frac{(c_1+c_2)}{\alpha^2}\big[(n-1)(n~ tr^2\phi - p~ tr\phi) - \cos^2\theta_1 \nonumber   \\ 
&(p~ tr TP_1 +d_1q) - \cos^2\theta_2(p~ tr TP_2 + d_2q)\big] + \frac{(n-1)(c_1-c_2)}{2\alpha}(2~ tr\phi - np)   \nonumber  \\ 
&+ n^2 \parallel\mathcal{H}\parallel^2 - n\mathcal{C}.	
\end{align}
Consider a hyperplane $\mathcal{W}$ of $T_xN.$ Without loss of generality, we can assume that $\{e_1, ..., e_{n-1}\}$ span $\mathcal{W}.$ Consider a function $\mathcal{V}$ given by a quadratic polynomial in the components of the second fundamental form as
\begin{align}\label{cas6}
\mathcal{V} =&~ u \mathcal{C} + \frac{(n+u)(n^2-n-u)}{nu}\sum_{r=n+1}^{m}\sum_{1 \leq i, j \leq n-1}(h_{ij}^r)^2 - 2\tau + \frac{n(n-1)(c_1+c_2)}{2\alpha^2}  \nonumber   \\ 
&(p^2+2q) + \frac{(c_1+c_2)}{\alpha^2}\big[(n-1)(n~ tr^2\phi - p~ tr\phi) - \cos^2\theta_1(p~ tr TP_1 +d_1q)    \nonumber   \\ 
&- \cos^2\theta_2(p~ tr TP_2 + d_2q)\big] + \frac{(n-1)(c_1-c_2)}{2\alpha}(2~ tr\phi - np). 
\end{align}
From \eqref{cas4} and \eqref{cas6}, we obtain
\begin{align}\label{cas8}
\mathcal{V} =& \frac{n^2+n(u-1)-2u}{u}\sum_{r=n+1}^{m}\sum_{i=1}^{n-1}(h_{ii}^r)^2 + \frac{2(n+u)(n-1)}{u}\sum_{r=n+1}^{m}\sum_{1 \leq i < j \leq n-1}(h_{ij}^r)^2   \nonumber \\ 
&+ \frac{2(n+u)}{n}\sum_{r=n+1}^{m}\sum_{i=1}^{n-1}(h_{in}^r)^2 + \frac{u}{n}\sum_{r=n+1}^{m}(h_{nn}^r)^2 - 2\sum_{r=n+1}^{m}\sum_{1 \leq i < j \leq n}h_{ii}^rh_{jj}^r.
\end{align}
The solutions of the following system of linear homogeneous equations give the critical points $h^c = (h_{11}^{n+1}, h_{12}^{n+1}, ..., h_{nn}^{n+1}, ..., h_{11}^{m}, ..., h_{nn}^{m})$ of $\mathcal{V}$:
\begin{equation}\label{cas9}
\begin{cases}
\dfrac{\partial\mathcal{V}}{\partial h_{ii}^r} = \dfrac{2(n+u)(n-1)}{u}h_{ii}^r - 2\sum_{k=1}^{n}h_{kk}^r = 0,       \\
\dfrac{\partial\mathcal{V}}{\partial h_{nn}^r} = \dfrac{2u}{n}h_{nn}^r - 2\sum_{k=1}^{n-1}h_{kk}^r = 0,        \\ 
\dfrac{\partial\mathcal{V}}{\partial h_{ij}^r} = \dfrac{4(n+u)(n-1)}{u}h_{ij}^r = 0,      \\
\dfrac{\partial\mathcal{V}}{\partial h_{in}^r} = \dfrac{4(n+u)}{n}h_{in}^r,      
\end{cases}
 \end{equation}    
 where $i, j \in \{1, ..., n\}, i \neq j, r \in \{n+1, ..., m\}.$  
 From above system of equations, we observe that every solution $h^c$ of \eqref{cas9} has $h_{ij}^r = 0$ for $i \neq j$, which characterize submanifolds with a trivial normal connection. Also the determinant of the first two equations in \eqref{cas9} is zero, indicating the existence of solutions that do not correspond to totally geodesic submanifolds. Moreover, the eigenvalues of the Hessian matrix $\mathcal{H}(\mathcal{V})$ of $\mathcal{V}$ are given by   
 \begin{align}
&\lambda_{11}^r = 0,~ \lambda_{22}^r = \frac{n^2(n-1) + u^2}{nu},~ \lambda_{33}^r = ... = \lambda_{33}^r = \frac{2(n-1)(n+u)}{u}, \nonumber \\
&\lambda_{ij}^r = \frac{4(n-1)(n+u)}{u},~  \lambda_{in}^r = \frac{4(n+u)}{n}, \nonumber 
 \end{align}
 for $i, j \in \{1, ..., n-1\}, i \neq j.$\\
 Therefore, we conclude that $\mathcal{V}$ is a parabolic function that attains a minimum at $h^c,$ satisfying $\mathcal{V}(h^c) = 0$ for the solution $h^c$ of the system \eqref{cas9}. Hence $\mathcal{V} \geq 0.$\\
 Thus, we obtain from \eqref{cas6},
\begin{align}\label{caso1}
2\tau \leq &~ u \mathcal{C} + \frac{(n-1)(n+u)(n^2-n-u)}{nu}\mathcal{C}(\mathcal{W}) + \frac{n(n-1)(c_1+c_2)}{2\alpha^2}(p^2+2q)  \nonumber   \\ 
&+ \frac{(c_1+c_2)}{\alpha^2}\big[(n-1)(n~ tr^2\phi - p~ tr\phi) - \cos^2\theta_1(p~ tr TP_1 +d_1q)    \nonumber   \\ 
&- \cos^2\theta_2(p~ tr TP_2 + d_2q)\big] + \frac{(n-1)(c_1-c_2)}{2\alpha}(2~ tr\phi - np).
\end{align}
Now as \eqref{caso1} holds for all such hyperplanes $\mathcal{W}$ of $T_xN,$ hence we obtain the desired inequalities \eqref{cas1} and \eqref{cas2}. Also the equality holds in \eqref{cas1} and \eqref{cas2} if and only if we have the following
\begin{itemize}
\item[(i)] $h_{ij}^r = 0, 1 \leq i \neq j \leq n,$
\item[(ii)] $h_{ii}^r = h_{jj}^r = \frac{u}{n(n-1)}h_{nn}^r, 1 \leq i, j \leq n-1, n+1 \leq r \leq m.$
\end{itemize}
Hence, from the above conditions, it is clear that the equality holds in \eqref{cas1} and \eqref{cas2} at $x \in N$ if and only if the submanifold $N$ is invariantly quasi-umbilical with trivial normal connection in $M$ and for some suitable orthonormal basis $\{e_1, ..., e_n\}$ of $T_xN$ and $\{e_{n+1}, ..., e_m\}$ of $(T_xN)^{\perp}$, the shape operator takes the form as given in \eqref{cas3}.
 \end{proof}

	\begin{corollary}\label{cor3.4}
		Let $N$ be an $n$-dimensional submanifold (as given in Table \ref{Table 1}) of a locally metallic product space form $M = (M_1(c_1) \times M_2(c_2), g, \phi$) of dimension $m$. Then, the following optimal relationships involving generalized normalized $\delta$-Casorati curvatures for the different types of slant submanifolds (given in Table \ref{Table 3.4}) hold and the equality holds if and only if the submanifold $N$ is invariantly quasi-umbilical with trivial normal connection in $M$ and for some suitable orthonormal basis $\{e_1, ..., e_n\}$ of $T_xN$ and $\{e_{n+1}, ..., e_m\}$ of $(T_xN)^{\perp}$, the shape operator takes the form as given in \eqref{cas3}.
\end{corollary}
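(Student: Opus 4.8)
The plan is to obtain Corollary \ref{cor3.4} as a direct specialization of Theorem \ref{casorati1}, since every submanifold type listed in Table \ref{Table 1} is a bi-slant submanifold with prescribed slant angles $\theta_1$ and $\theta_2$. First I would recall that the inequalities \eqref{cas1} and \eqref{cas2} hold verbatim for any bi-slant submanifold, with the quantities $\cos^2\theta_1(p\,tr\,TP_1 + d_1 q)$ and $\cos^2\theta_2(p\,tr\,TP_2 + d_2 q)$ encoding the contributions of the two slant distributions $\mathcal{D}_1$ and $\mathcal{D}_2$ of dimensions $d_1$ and $d_2$. The whole computation then reduces to inserting the values of $\theta_1$ and $\theta_2$ read off from Table \ref{Table 1} for each case, into both the $0<u<n(n-1)$ branch and the $u>n(n-1)$ branch.

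Concretely, for a semi-slant submanifold $\theta_1 = 0$ forces $\cos^2\theta_1 = 1$, so the first bracketed term becomes $p\,tr\,TP_1 + d_1 q$, while $\theta_2 = \theta$ leaves the second term as $\cos^2\theta(p\,tr\,TP_2 + d_2 q)$. For a hemi-slant submanifold $\theta_2 = \tfrac{\pi}{2}$ gives $\cos^2\theta_2 = 0$, so the $\mathcal{D}_2$-term drops out entirely and only $\cos^2\theta(p\,tr\,TP_1 + d_1 q)$ survives. For a semi-invariant submanifold both simplifications occur simultaneously ($\cos^2\theta_1 = 1$ and $\cos^2\theta_2 = 0$), yielding $p\,tr\,TP_1 + d_1 q$. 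In each case the remaining terms of \eqref{cas1}--\eqref{cas2}, which carry no slant data, are left untouched, producing the corresponding rows of Table \ref{Table 3.4}.

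The only case requiring genuine care is the slant submanifold, where the bi-slant structure degenerates: one distribution vanishes, say $\mathcal{D}_2 = 0$, so that $\mathcal{D}_1 = TN$, the projection $P_1$ becomes the identity, $TP_1 = T$, and $d_1 = n$ while $d_2 = 0$. Substituting $\theta_1 = \theta_2 = \theta$ together with these identifications collapses the two slant contributions into the single term $\cos^2\theta(p\,tr\,T + nq)$, which is the last row of Table \ref{Table 3.4}. I do not expect any serious obstacle here; the equality discussion is inherited wholesale from Theorem \ref{casorati1}, since specializing the slant angles does not alter the structure of the shape operator in \eqref{cas3}. Hence in every case the equality still holds precisely when $N$ is invariantly quasi-umbilical with trivial normal connection in $M$ and $\mathcal{A}_{n+1}$ takes the form \eqref{cas3}, which completes the argument.
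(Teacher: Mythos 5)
Your proposal is correct and is exactly the argument the paper intends: Corollary \ref{cor3.4} is stated as an immediate specialization of Theorem \ref{casorati1}, obtained by substituting the slant angles from Table \ref{Table 1} (including the degenerate slant case with $P_1 = I$, $TP_1 = T$, $d_1 = n$, $d_2 = 0$) into \eqref{cas1}--\eqref{cas2}, with the equality characterization inherited unchanged. Your case-by-case substitutions reproduce each row of Table \ref{Table 3.4} correctly, so there is nothing to add.
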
	
	\begin{table} [H]  
	\begin{center}
		\caption{Optimal inequalities involving generalized normalized $\delta$-Casorati curvatures for submanifolds of locally metallic product space form}
		\label{Table 3.4}
		\vspace{.06cm}
			\begin{tabular}{|l| p{10.5cm}|}
				\hline
				Type of $N$~~ & Optimal inequalities involving generalized normalized $\delta$-Casorati curvatures\\
				\hline
				Semi-slant & $(i)$ $2\tau \leq \delta_{\mathcal{C}}(u; n-1) + \frac{n(n-1)(c_1+c_2)}{2\alpha^2}(p^2+2q) + \frac{(c_1+c_2)}{\alpha^2}[(n-1)(n~ tr^2\phi - p~ tr\phi) - p~ tr TP_1 - d_1q) - \cos^2\theta(p~ tr TP_2 + d_2q)] + \frac{(n-1)(c_1-c_2)}{2\alpha}(2~ tr\phi - np)$ \vspace{.1cm} \\ 
			&$(ii)$ $2\tau \leq \hat{\delta}_{\mathcal{C}}(u; n-1) + \frac{n(n-1)(c_1+c_2)}{2\alpha^2}(p^2+2q) + \frac{(c_1+c_2)}{\alpha^2}[(n-1)(n~ tr^2\phi - p~ tr\phi) - p~ tr TP_1 - d_1q - \cos^2\theta(p~ tr TP_2 + d_2q)] + \frac{(n-1)(c_1-c_2)}{2\alpha}(2~ tr\phi - np)$ \\
				\hline
				Hemi-slant & $(i)$ $2\tau \leq \delta_{\mathcal{C}}(u; n-1) + \frac{n(n-1)(c_1+c_2)}{2\alpha^2}(p^2+2q) + \frac{(c_1+c_2)}{\alpha^2}[(n-1)(n~ tr^2\phi - p~ tr\phi) - \cos^2\theta(p~ tr TP_1 +d_1q)] + \frac{(n-1)(c_1-c_2)}{2\alpha}(2~ tr\phi - np)$ \vspace{.1cm} \\ 
			&$(ii)$ $2\tau \leq \hat{\delta}_{\mathcal{C}}(u; n-1) + \frac{n(n-1)(c_1+c_2)}{2\alpha^2}(p^2+2q) + \frac{(c_1+c_2)}{\alpha^2}[(n-1)(n~ tr^2\phi - p~ tr\phi) - \cos^2\theta(p~ tr TP_1 +d_1q)] + \frac{(n-1)(c_1-c_2)}{2\alpha}(2~ tr\phi - np)$ \\
				\hline
				Semi-invariant & $(i)$ $2\tau \leq \delta_{\mathcal{C}}(u; n-1) + \frac{n(n-1)(c_1+c_2)}{2\alpha^2}(p^2+2q) + \frac{(c_1+c_2)}{\alpha^2}[(n-1)(n~ tr^2\phi - p~ tr\phi) - p~ tr TP_1 - d_1q] + \frac{(n-1)(c_1-c_2)}{2\alpha}(2~ tr\phi - np)$ \vspace{.1cm} \\ 
			&$(ii)$ $2\tau \leq \hat{\delta}_{\mathcal{C}}(u; n-1) + \frac{n(n-1)(c_1+c_2)}{2\alpha^2}(p^2+2q) + \frac{(c_1+c_2)}{\alpha^2}[(n-1)(n~ tr^2\phi - p~ tr\phi) - p~ tr TP_1 - d_1q] + \frac{(n-1)(c_1-c_2)}{2\alpha}(2~ tr\phi - np)$ \\
				\hline
				Slant & $(i)$ $2\tau \leq \delta_{\mathcal{C}}(u; n-1) + \frac{n(n-1)(c_1+c_2)}{2\alpha^2}(p^2+2q) + \frac{(c_1+c_2)}{\alpha^2}[(n-1)(n~ tr^2\phi - p~ tr\phi) - \cos^2\theta(p~ tr T + nq)] + \frac{(n-1)(c_1-c_2)}{2\alpha}(2~ tr\phi - np)$ \vspace{.1cm} \\ 
			&$(ii)$ $2\tau \leq \hat{\delta}_{\mathcal{C}}(u; n-1) + \frac{n(n-1)(c_1+c_2)}{2\alpha^2}(p^2+2q) + \frac{(c_1+c_2)}{\alpha^2}[(n-1)(n~ tr^2\phi - p~ tr\phi) - \cos^2\theta(p~ tr T +nq)] + \frac{(n-1)(c_1-c_2)}{2\alpha}(2~ tr\phi - np)$ \\
				\hline
			\end{tabular}	
		\end{center}
	\end{table}	


		\section*{Conclusion} 
In this study, we have established optimal relationships between key intrinsic and extrinsic curvature invariants for various classes of submanifolds within the context of metallic Riemannian product space forms. For an isometric immersion, the squared mean curvature at each point represents the amount of tension exerted by the ambient space on the submanifold. The derived optimal inequalities involving $\delta$-invariant (an intrinsic invariant) and squared mean curvature (an extrinsic invariant) characterize the conditions for an ideal immersion, where the submanifold experiences the least possible tension from the surrounding space. Additionally, the optimal inequalities involving $\delta$-Casorati curvature (an extrinsic invariant) provide meaningful characterizations of the geometric behaviour of these submanifolds. These results provide a strong foundation for further research. They can be extended to other types of submanifolds by using warped product structures, which allow for more general geometric settings. Additionally, by considering soliton structures such as Ricci solitons, Yamabe solitons, and Ricci–Bourguignon solitons, we can develop solitonic inequalities that offer a better understanding of how curvature and geometric flows interact on these submanifolds.

\section*{Acknowledgments}
The first author is thankful to UGC for providing financial assistance in terms of SRF scholarship vide NTA Ref. No.: $201610070797$(CSIR-UGC NET June 2020). The second author is thankful to the Department of Science and Technology (DST) Government of India for providing financial assistance in terms of FIST project (TPN-69301) vide the letter with Ref. No.: (SR/FST/MS-1/2021/104).

	\noindent 	H. Kaur and G. Shanker\newline
	Department of Mathematics and Statistics\newline
	Central University of Punjab\newline
	Bathinda, Punjab-151401, India.\newline
	Email: harmandeepkaur1559@gmail.com, \newline
	gauree.shanker@cup.edu.in.

\begin{thebibliography}{28}

\bibitem{gwquasiu}D.E. Blair, \textit{Quasi-umbilical, minimal submanifolds of Euclidean space}, Simon Stevin, \textbf{51}, 3–22, 1977.
		
\bibitem{gw2} A. M. Blaga and C. E. Hretcanu, \textit{Invariant, anti-invariant and slant submanifolds of a metallic Riemannian manifold}, Novi Sad Journal of Mathematics \textbf{48}(2), 57-82, 2018.

\bibitem{casoext1} F. Casorati, \textit{Mesure de la courbure des surfaces suivant l’id´ee commune.}, Acta Mathematica, \textbf{14}, 95-110, 1890.

\bibitem{deltaa}B. Y. Chen, \textit{Some pinching and classification theorems for minimal submanifolds}, Archiv der Mathematik, \textbf{60}, 568-578, 1993.

\bibitem{chen1} B. Y. Chen (2005), \textit{A general optimal inequality for arbitrary Riemannian submanifolds}, Journal of Inequalities in Pure and Applied Mathematics, \textbf{6}(3), 1-23.

\bibitem{book} B.Y. Chen, \textit{Pseudo-Riemannian geometry, $\delta$-invariants and applications}, World Scientific, Hackensack, 2011.

\bibitem{h9} B. Y. Chen, \textit{A tour through $\delta$-invariants: From Nash's embedding theorem to ideal immersions, best ways of living and beyond}, Publications de L’Institut Mathematique, Nouvelle serie, tome \textbf{94}(108), 67–80, 2013.

\bibitem{gw119} B. Y. Chen, \textit{Recent developments in Wintgen inequality and Wintgen ideal submanifolds}, International Electronic Journal of Geometry, \textbf{14}(1), 6-45, 2021.

\bibitem{gwcasre} B.Y. Chen, \textit{Recent developments in $\delta $-Casorati curvature invariants}, Turkish Journal of Mathematics, \textbf{45}(1), 1-46, 2021.

\bibitem{gwchenbook} B.Y. Chen, A. M. Blaga, \textit{Recent developments on Chen–Ricci inequalities in differential geometry}, Geometry of Submanifolds and Applications, 1-61, 2024.

\bibitem{gw125} B. Y. Chen, M. A. Choudhary, A. Perween, \textit{Recent advances in metallic Riemannian geometry: a comprehensive review}, Tamkang Journal of Mathematics, 2024, \url{https://doi.org/10.5556/j.tkjm.56.2025.5405}.
						
\bibitem{gw4} P. J. De Smet, F. Dillen, L. Verstraelen and L. Vrancken, \textit{A pointwise inequality in submanifold theory}, Archivum mathematicum \textbf{35}(2), 115-128, 1999.
		
\bibitem{gw10} V. W. De Spinadel, \textit{The metallic means family and renormalization group techniques}, Trudy Inst. Mat. I Mekh. UrO RAN, \textbf{6}(1), 173-189, 2000.

\bibitem{h11} S. Decu, S. Haesen, L. Verstraelen, \textit{Optimal inequalities characterising quasi-umbilical submanifolds}, Journal of Inequalities in Pure and Applied Mathematics, \textbf{9}(3), 2008.
		
\bibitem{gw5} F. Dillen, J. Fastenakels, J. Van der Veken and I. Van de Woestyne, \textit{Remarks on an inequality involving the normal scalar curvature}, Pure and Applied Differential Geometry, PADGE \textbf{2007}, 83-92, 2007.
		
 \bibitem{gw8} J. Ge and Z. Tang, \textit{A proof of the DDVV conjecture and its equality case}, Pacific Journal of Mathematics \textbf{237}(1), 87-95, 2008.
 
 \bibitem{gw111} S. I. Goldberg, K. Yano, \textit{Polynomial structures on manifolds}, In Kodai Mathematical Seminar Reports, Department of Mathematics, Tokyo Institute of Technology, \textbf{22}(2), 199-218, 1970.
			
\bibitem{gw12} I. V. Guadalupe and L. Rodriguez, \textit{Normal curvature of surfaces in space forms}, Pacific J. Math. \textbf{106}, 95–103, 1983.
		
\bibitem{gw6} C. E. Hretcanu and A. M. Blaga, \textit{Slant and semi-slant submanifolds in metallic Riemannian manifolds}, Journal of Function Spaces \textbf{2018}, 2018.
		
\bibitem{gw1} C. E. Hretcanu and M. Crasmareanu, \textit{Metallic structures on Riemannian manifolds}, Revista de la Union Mathematica Argentina \textbf{54}(2), 15-27, 2013.

\bibitem{casoext2} J. J. Koenderink, A. J. Van Doorn, S. C. Pont, \textit{Shading, a view from inside}, Seeing and Perceiving, \textbf{25} (3/4), 303-338, 2012.

\bibitem{gw3} M. Kon and K. Yano, \textit{Structures on manifolds}, World scientific, 1985.

\bibitem{gw13} Z. Lu, \textit{Normal scalar curvature conjecture and its applications}, J. Funct. Anal. \textbf{261}, 1284–1308, 2011.
	
\bibitem{gw112}  J. F. Nash, \textit{The imbedding problem for Riemannian manifolds}, Ann. of Math. \textbf{63}, 20-63, 1956.
		
\bibitem{gw11} P. Wintgen, \textit{On Chen-Willmore’s inequality (in French)}, C. R. Acad. Sci. Paris Ser. A, \textbf{288}(21), 993-995, 1979.

	\end{thebibliography}
\end{document}